\documentclass[11pt]{article}

\usepackage{amsmath,amssymb,amsfonts,amsthm}
\usepackage{thmtools}
\usepackage{mathrsfs}
\usepackage{geometry}
\usepackage{xcolor}
\usepackage{hyperref}
\usepackage[capitalise,nameinlink,noabbrev]{cleveref}
\usepackage{enumitem}
\usepackage{tcolorbox}

\hypersetup{
    colorlinks=true,
    linkcolor={red!50!black},
    citecolor={blue!50!black},
    urlcolor={magenta!70!black}
}

\numberwithin{equation}{section}

\newtheorem{theorem}{Theorem}[section]
\newtheorem{lemma}[theorem]{Lemma}
\newtheorem{corollary}[theorem]{Corollary}
\newtheorem{proposition}[theorem]{Proposition}

\theoremstyle{definition}
\newtheorem{definition}[theorem]{Definition}
\newtheorem{remark}[theorem]{Remark}

\newcommand{\R}{\mathbb{R}}
\newcommand{\C}{\mathbb{C}}
\newcommand{\Hh}{\mathbb{H}}

\newcommand{\dd}{\,\mathrm{d}}

\tcbset{
  violetbox/.style={
    colback=violet!6!white,
    colframe=violet!50!black,
    colbacktitle=violet!65!black,
    coltitle=white,
    fonttitle=\bfseries
  }
}

\title{A note on harmonic polynomials on Heisenberg and Carnot groups}
\author{Francesco Paolo Maiale}
\date{}

\begin{document}
\maketitle

\begin{abstract}
We study harmonic polynomial traces on Kor\'anyi spheres in the Heisenberg
groups and polynomial trace filtrations on general Carnot groups. The
homogeneous harmonic trace spaces are algebraically direct and complete on
the Heisenberg sphere, but different homogeneous degrees are not generally
orthogonal. In fact, on every $\Hh^n$ the degree-one and degree-three
spaces fail to be orthogonal for every finite positive full-support surface
measure.
The decomposition thus uses the orthogonal increments of the
cumulative harmonic-degree filtration. These increments give a complete
Hilbert sum, have dimension $\binom{m+2n-1}{2n-1}$, and retain a
$U(n)$-equivariant refinement. A constructive triangular recursion produces
all homogeneous harmonic polynomials and their dimensions.

On an arbitrary Carnot group, the same harmonic filtration always
decomposes its closed span; equality with the full spherical $L^2$ space is
identified as a separate harmonic-density condition. By contrast, the
filtration by all polynomial traces is unconditionally complete by
Stone--Weierstrass.  We describe its dimensions through the vanishing ideal
of the gauge sphere and compute them explicitly for every $\Hh^n$.

Our main algebraic result is a Fischer-type decomposition. With $\eta_+^2(z,t)=|z|^2+4t$, we prove that $P_m(\mathbb H)=H_m(\mathbb H)\oplus \eta_+^2P_{m-2}(\mathbb H)$.
\end{abstract}

\medskip
\noindent\textbf{Mathematics Subject Classification (2020):} 35R03, 43A85, 33C45, 35H20.

\smallskip
\noindent\textbf{Keywords:} Heisenberg group, Carnot group, sub-Laplacian, harmonic polynomials, Kor\'anyi sphere, Fischer decomposition.

\section{Introduction}

The aim of this paper is to develop a concrete picture of spherical harmonic
analysis for the sub-Laplacian on the Heisenberg group and, more generally,
on Carnot groups.

On Euclidean space $\mathbb R^d$, homogeneous harmonic polynomials of degree
$m$ restrict to spherical harmonics on $\mathbb S^{d-1}$, and
$L^2(\mathbb S^{d-1})$ is their orthogonal Hilbert direct sum; see, for
example, Stein--Weiss~\cite[Ch.~IV]{SteinWeiss1971} or
Axler--Bourdon--Ramey~\cite[Ch.~5]{AxlerBourdonRamey}. This
statement combines three logically different properties: algebraic
directness, completeness, and orthogonality.

On Heisenberg and Carnot groups, a parallel theory is available through the
work of Folland--Stein~\cite{FollandStein1974,FollandStein1982},
Greiner~\cite{Greiner1980}, and subsequent authors. Our paper keeps
a polynomial and PDE viewpoint, but distinguishes carefully
between the three properties above. On Kor\'anyi spheres the homogeneous
harmonic traces are algebraically direct and complete, whereas different
homogeneous degrees are not generally orthogonal.

\subsection{Heisenberg groups and the sub-Laplacian operator}
\label{sec:heisenberg-intro}

For orientation, identify the first Heisenberg group
$\Hh=\Hh^1$ with $\C\times\R$ and use the group law
\[
  (z,t)(z',t')
  =\bigl(z+z',t+t'+2\operatorname{Im}(z\overline{z'})\bigr).
\]
Writing $z=x+\imath y$, with $x,y \in \R$, the horizontal left-invariant fields and the
analytic-sign sub-Laplacian are
\[
  X=\partial_x+2y\partial_t,
  \qquad
  Y=\partial_y-2x\partial_t,
  \qquad
  \Delta_H=X^2+Y^2.
\]
Equivalently, with $T=\partial_t$, the complex horizontal fields are
\[
  Z=\frac12(X-\imath Y)
    =\partial_z+\imath \overline z\,\partial_t,
  \qquad
  \overline Z=\frac12(X+\imath Y)
    =\partial_{\overline z}-\imath z\,\partial_t.
\]
The commutator $[X,Y]=-4\partial_t$ generates the central direction, so
$\Delta_H$ is hypoelliptic but not elliptic.

In~\cref{sec:prelim}, we
describe in details the full higher-dimensional group law, the real and complex vector
fields, the stratification, the expanded formula for $\Delta_H$, and the
Kor\'anyi polar measure.

\subsection{Homogeneity and statements of the main results}

We now fix the homogeneity convention. Recall that the central variable has 
weight two under the Heisenberg dilations:
\[
  \delta_r(z,t)=(rz,r^2t),\qquad r>0.
\]

\begin{definition}\label{def:homogeneous}
Let $n\geq1$.  A polynomial $p$ on $\Hh^n$ is
\emph{homogeneous of Heisenberg degree $m$} if
\[
  p\circ\delta_r=r^m p
  \qquad\text{for every }r>0.
\]
With complex coefficients, we write
\[
  P_m(\Hh^n)
   = \bigl\{p \ : \ p\circ\delta_r=r^mp \bigr\},
  \qquad
  H_m(\Hh^n)
  = \ker\bigl(\Delta_H|_{P_m(\Hh^n)}\bigr).
\]
We use the convention $P_j(\Hh^n)=\{0\}$ for $j<0$. When $n=1$ we write
$P_m(\Hh)$ and $H_m(\Hh)$.
\end{definition}

Following Folland--Stein, Greiner introduced in~\cite{Greiner1980} a one-parameter family of
left-invariant operators, denoted $L_\alpha$ in his normalization, and the
associated homogeneous polynomial nullspaces
\[
  H_m^{(\alpha)}=\ker\bigl(L_\alpha|_{P_m(\Hh)}\bigr).
\]
His vector-field, sign, and scalar conventions differ from those used in the present paper.
After matching them, the case $\alpha=0$ corresponds to
$H_m(\Hh)$. Greiner's construction gives an explicit basis for this space
and proves
\[
  \dim H_m(\Hh)=m+1.
\]
Greiner and Koornwinder proved completeness of the
traces by solving the Dirichlet problem on the Kor\'anyi ball in~\cite{GreinerKoornwinder1983}; see also Dunkl's biorthogonal analysis~\cite{Dunkl1986}.
Greiner already noted that the system need not be
orthogonal. For the one-variable functions underlying the spherical basis,
Gasper proved a related full-circle complex-weight orthogonality and explained
why it does not persist on $(0,\pi)$ with a positive measure when the
parameters differ; refer to~\cite[pp.~1261--1262]{Gasper1981} for more details. In~\ref{sec:Hn-harmonic-basis}, we give a self-contained
triangular construction of all elements of $H_m(\Hh^n)$ and prove the
dimension formula without special functions.

\smallskip
Let $\rho$ be the Kor\'anyi-Folland homogeneous norm and $S_\rho^{(n)}$ the corresponding unit sphere:
\[
  \rho(z,t)=\bigl(|z|^4+t^2\bigr)^{1/4},
  \qquad
  S_\rho^{(n)}=\{\rho=1\}.
\]
Let $\sigma_n$ be the dilation-polar measure normalized in~\cref{sec:prelim}, set
\[
  H_m(S_\rho^{(n)})
  = \bigl\{h|_{S_\rho^{(n)}} \ : \ h\in H_m(\Hh^n) \bigr\},
\]
and define
\[
  \mathcal E_{-1}^{(n)}=\{0\},
  \qquad
  \mathcal E_m^{(n)}
  =\bigoplus_{j=0}^m{}^{\mathrm{alg}}H_j(S_\rho^{(n)}),
  \qquad
  \mathcal K_m^{(n)}
  =\mathcal E_m^{(n)}
     \cap\bigl(\mathcal E_{m-1}^{(n)}\bigr)^\perp,
\]
where orthogonality is taken in $L^2(S_\rho^{(n)},\sigma_n)$. Our first main result is a Heisenerg analogue of the classical spherical harmonic decomposition.

\begin{theorem}
\label{thm:L2-decomposition-H-intro}
For every $n\geq1$ and $m\geq0$, restriction to $S_\rho^{(n)}$ is injective
on $H_m(\Hh^n)$, the family of trace spaces
$\{H_m(S_\rho^{(n)})\}_{m\geq0}$ is algebraically direct, and its algebraic
span is uniformly dense in $C(S_\rho^{(n)})$.

The homogeneous degrees are not pairwise orthogonal. More precisely, for
every finite positive Borel measure of full support on $S_\rho^{(n)}$,
\[
  H_1(S_\rho^{(n)})\not\perp H_3(S_\rho^{(n)}).
\]
Nevertheless,
\[
  \mathcal E_m^{(n)}
  =\mathcal E_{m-1}^{(n)}
     \mathbin{\widehat\oplus}\mathcal K_m^{(n)},
  \qquad
  \dim\mathcal K_m^{(n)}
  =\binom{m+2n-1}{2n-1},
\]
and the Hilbert decomposition is
\begin{equation}\label{eq:intro-corrected-Hn}
  L^2(S_\rho^{(n)},\sigma_n)
  =\widehat{\bigoplus_{m=0}^{\infty}}\mathcal K_m^{(n)}.
\end{equation}
For $n=1$, this gives $\dim\mathcal K_m^{(1)}=m+1$.
\end{theorem}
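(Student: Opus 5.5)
The plan is to treat the four assertions separately: injectivity and directness, density, non-orthogonality, and then the dimension count and the Hilbert sum.

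\emph{Injectivity and directness.} Both come from homogeneity alone. Every nonzero point of $\Hh^n$ can be written as $\delta_r\omega$ with $r>0$ and $\omega\in S_\rho^{(n)}$. Hence if $h\in H_m(\Hh^n)$ vanishes on the sphere, then $h(\delta_r\omega)=r^mh(\omega)=0$, so $h\equiv0$. For directness, suppose $\sum_{j\le m}h_j|_{S_\rho^{(n)}}=0$. I would not argue on the sphere alone but along the dilation orbit: the relation only fixes $r=1$, so I use it through the harmonic extension. Set $F=\sum_j h_j$. Since $F$ vanishes on the sphere, it is the $\Delta_H$-harmonic solution of the Dirichlet problem on the Kor\'anyi ball with zero boundary data. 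The weak maximum principle for $\Delta_H$ then gives $F\equiv0$ on the ball. Expanding $F(\delta_r\omega)=\sum_j r^jh_j(\omega)$ as a polynomial in $r$ yields $h_j(\omega)=0$ for every $j$, hence $h_j=0$ by injectivity.

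\emph{Density.} I expect this to be the main obstacle. The harmonic span is not an algebra, so Stone--Weierstrass does not apply directly. My first approach is to adapt the Greiner--Koornwinder argument~\cite{GreinerKoornwinder1983} to $\Hh^n$, in four steps.
\begin{enumerate}[label=(\roman*)]
\item Approximate $f\in C(S_\rho^{(n)})$ uniformly by a polynomial trace, using Stone--Weierstrass.
\item Solve the Dirichlet problem for $\Delta_H$ on the Kor\'anyi ball. Every boundary point is regular by the exterior cone/gauge condition, and the maximum principle gives an $L^\infty$ bound.
\item Use analytic hypoellipticity of $\Delta_H$ on $\Hh^n$ to expand the solution $u$ near the origin into homogeneous Heisenberg-harmonic components. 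Then pass from $u\circ\delta_r$, $r<1$, to $u$ on the boundary by uniform continuity up to the boundary.
\item Control the tail of the expansion of $u\circ\delta_r$ by Cauchy-type estimates on a slightly larger ball.
\end{enumerate}
Step (iv) is the delicate part. As a fallback, I would try a Fischer-type reduction: the identity $P_m=H_m\oplus\eta_+^2P_{m-2}$ from the abstract, generalized to $\Hh^n$, combined with an expression of $\eta_+^2$-multiples on the sphere through lower harmonic data.

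\emph{Non-orthogonality.} I would take $h_1=z_1\in H_1$ and look for $h_3=z_1(|z|^2+\imath a\,t)\in H_3$, adjusting by a harmonic correction only if the computation forces it. Applying $\Delta_H=2\sum_j(Z_j\overline Z_j+\overline Z_jZ_j)$ to $z_1|z|^2$ and to $z_1t$ gives two multiples of $z_1$, so a unique real $a$ makes the combination harmonic. Then
\[
\langle h_3,h_1\rangle_\mu=\int|z_1|^2|z|^2\dd\mu+\imath a\int|z_1|^2t\dd\mu .
\]
Its real part is strictly positive for any positive full-support measure, since $|z_1|^2|z|^2\ge0$ is nonzero on an open subset of the sphere.

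\emph{Dimensions and the Hilbert sum.} By directness, $\mathcal E_m^{(n)}=\mathcal E_{m-1}^{(n)}\widehat\oplus\mathcal K_m^{(n)}$ is the finite-dimensional orthogonal complement of $\mathcal E_{m-1}^{(n)}$ in $\mathcal E_m^{(n)}$, so $\dim\mathcal K_m^{(n)}=\dim H_m(\Hh^n)$. The triangular construction of \cref{sec:Hn-harmonic-basis} (equivalently, surjectivity of $\Delta_H\colon P_m\to P_{m-2}$) gives $\dim H_m=\dim P_m-\dim P_{m-2}$. The Hilbert series of $P_\bullet(\Hh^n)$ is $(1-x)^{-2n}(1-x^2)^{-1}$, so the difference has generating function $(1-x)^{-2n}$, whose coefficients are $\binom{m+2n-1}{2n-1}$. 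The spaces $\mathcal K_m^{(n)}$ are mutually orthogonal by construction, and $\bigoplus_{j\le m}\mathcal K_j^{(n)}=\mathcal E_m^{(n)}$. Uniform density of $\bigcup_m\mathcal E_m^{(n)}$ in $C(S_\rho^{(n)})$ implies $L^2$-density for the finite measure $\sigma_n$, which gives~\eqref{eq:intro-corrected-Hn}. For $n=1$ the formula specializes to $\dim\mathcal K_m^{(1)}=m+1$.
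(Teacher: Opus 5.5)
Your treatment of injectivity and directness is fine; the weak maximum principle on the closed ball suffices where the paper invokes Bony's strong one. The $h_1=z_1$, $h_3=z_1(|z|^2+\imath a t)$ obstruction also works: $\Delta_H(z_1|z|^2)=4(n+1)z_1$ and $\Delta_H(z_1t)=-4\imath z_1$ give the real value $a=-(n+1)$. The dimension count and the final Hilbert-sum assembly are essentially the paper's.

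\textbf{The genuine gap is the uniform density of harmonic traces, the one deep input of the theorem.} Your steps (iii)--(iv) need the weighted-homogeneous harmonic expansion $\sum_m u_m$ of a function $\Delta_H$-harmonic on a slightly larger Kor\'anyi ball to converge uniformly on the closed unit ball. On the Heisenberg group this is false, so no Cauchy-type estimate can control the tail. Analytic hypoellipticity only gives convergence in some neighbourhood of the origin.

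For instance, take $n=1$, $1<R<\sqrt2+1$ and $h_R=(R,0)$. The function $u(g)=\rho(h_R^{-1}g)^{-2}$ is a multiple of a left translate of the fundamental solution. It is therefore harmonic on a neighbourhood of the closed unit ball, yet
\[
  u(\imath r,0)=\bigl(r^4+6R^2r^2+R^4\bigr)^{-1/2}.
\]
The Maclaurin series of this function of $r$ is exactly $\sum_m r^m u_m(\imath,0)$ along the ray $\delta_r(\imath,0)$. Because of the simple zeros at $r^2=R^2(-3\pm2\sqrt2)$, its radius of convergence is $R(\sqrt2-1)<1$. So the homogeneous expansion diverges at interior points of the unit ball, even though $u$ is harmonic beyond it.

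Your fallback fails as well. The Fischer decomposition is established only for $n=1$; its extension to $\Hh^n$ is left open in the paper. More fundamentally, $w=|z|^2+4t$ is not constant on $S_\rho^{(n)}$. Hence traces of $w^kh$ are not harmonic traces, and the Euclidean reduction based on $|x|^2=1$ on the sphere has no analogue. The paper states this explicitly.

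The paper does not prove density from scratch. It first reduces to finitely many angular types, using Fej\'er means in $\theta$ for $n=1$ and an approximate identity with finite spectrum on $U(n)$ in general. It then applies Greiner--Koornwinder's fixed-type completeness theorem, which rests on solvability of the Dirichlet problem on the Kor\'anyi ball, and finishes with a diagonal argument. You need that theorem, or an equally substantive replacement, in place of the Runge-type dilation argument.
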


Thus the statement is obtained by replacing the
homogeneous spaces by the orthogonal increments of their cumulative
filtration. Each element of $\mathcal K_m^{(n)}$ is still the trace of a
harmonic polynomial, but it generally mixes degree $m$ with lower
homogeneous degrees. The explicit obstruction and the complete proofs are
given in~\cref{sec:spherical,sec:higher-heisenberg}. The second main result is algebraic.

\begin{theorem}
\label{thm:eta-plus-decomposition-intro}
Let
\[
  w(z,t):=\eta_+^2(z,t)=|z|^2+4t.
\]
Then, for every $m\geq0$,
\begin{equation}\label{eq:intro-fischer}
  P_m(\Hh)
  =H_m(\Hh)\oplus wP_{m-2}(\Hh).
\end{equation}
Equivalently, every $p_m\in P_m(\Hh)$ has a unique representation
\[
  p_m=h_m+wq_{m-2},
  \qquad
  h_m\in H_m(\Hh),\quad q_{m-2}\in P_{m-2}(\Hh).
\]
The sum is algebraic. After restriction to $S_\rho$, its two summands are
not, in general, orthogonal in $L^2(S_\rho,\sigma)$, where
$\sigma=\sigma_1$.
\end{theorem}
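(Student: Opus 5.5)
The plan is to prove \eqref{eq:intro-fischer} by a dimension count plus a trivial-intersection argument, and then to exhibit one explicit pair that witnesses non-orthogonality.

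\emph{Dimensions.} A monomial $z^a\overline z^{\,b}t^c$ lies in $P_m(\Hh)$ iff $a+b+2c=m$. Hence
\[
\dim P_m(\Hh)=\sum_{0\le c\le m/2}(m-2c+1),
\qquad
\dim P_m(\Hh)-\dim P_{m-2}(\Hh)=m+1 .
\]
Since $\dim H_m(\Hh)=m+1$ (Greiner, or the triangular construction of \cref{sec:Hn-harmonic-basis} with $n=1$), we get
\[
\dim H_m(\Hh)+\dim wP_{m-2}(\Hh)=\dim P_m(\Hh).
\]
Multiplication by $w$ is injective and $w$ has Heisenberg degree $2$, so $wP_{m-2}\subset P_m$. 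It therefore suffices to show $H_m(\Hh)\cap wP_{m-2}(\Hh)=\{0\}$. Equivalently, the linear map
\[
L:P_{m-2}(\Hh)\to P_{m-2}(\Hh),\qquad Lq=\Delta_H(wq),
\]
must be injective. Uniqueness of the representation $p_m=h_m+wq_{m-2}$ then follows.

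\emph{Computing $L$.} By the Leibniz rule,
\[
\Delta_H(wq)=w\,\Delta_Hq+2\bigl((Xw)(Xq)+(Yw)(Yq)\bigr)+(\Delta_Hw)\,q .
\]
Here $Xw=2x+8y$, $Yw=2y-8x$ and $\Delta_Hw=4$. A short computation gives $xX+yY=x\partial_x+y\partial_y$ and $yX-xY=(y\partial_x-x\partial_y)+2|z|^2\partial_t$. Hence
\[
Lq=w\Delta_Hq+4(x\partial_x+y\partial_y)q+16(y\partial_x-x\partial_y)q+32|z|^2\partial_tq+4q .
\]
Both $w$ and $\Delta_H$ commute with the rotations $(z,t)\mapsto(e^{\imath\theta}z,t)$, so $L$ preserves each $U(1)$-weight space, spanned by the monomials $z^a\overline z^{\,b}t^c$ with $a-b=k$ fixed. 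Inside a fixed weight and fixed degree, a monomial is determined by its $t$-exponent $c$. I will show that $L$ is triangular with respect to increasing $t$-degree and has nonzero diagonal entries.

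\emph{The triangular estimate.} Consider the term of $L(z^a\overline z^{\,b}t^c)$ of lowest $t$-degree. The only contribution lowering $t$-degree by two comes from $|z|^2\cdot(\text{the }|z|^2\partial_t^2\text{ part of }\Delta_H)$. It produces a nonzero multiple of $c(c-1)\,z^{a+2}\overline z^{\,b+2}t^{c-2}$. Now take $q=\sum_c q_c$ and let $c_0$ be its smallest $t$-exponent. If $c_0\ge2$, then the $t^{c_0-2}$ component of $Lq$ comes only from $q_{c_0}$ and is nonzero, so $Lq\neq0$.

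The main obstacle is the cases $c_0\in\{0,1\}$. There the lowest surviving $t$-degree is $c_0-1$ or $c_0$, and several terms compete: $32|z|^2\partial_t$, the $\overline z\partial_z\partial_t$-type pieces of $w\Delta_H$, the Euler term, the rotation term, and $4q$. For these I will compute the leading coefficient explicitly. It should be a polynomial expression in $a,b$ plus the shift $4+4(a+b)$ coming from the Euler term and $4q$, and I expect it to be nonvanishing. If that fails in some weight, the fallback is to reorder by decreasing $t$-degree in that weight.

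\emph{Non-orthogonality.} Since $X^2t=Y^2t=0$, the polynomial $t$ lies in $H_2(\Hh)$, while $w\in wP_0(\Hh)$. The measure $\sigma$ is invariant under $t\mapsto-t$, so the odd integrand $t|z|^2$ integrates to zero and
\[
\int_{S_\rho}t\,w\dd\sigma=4\int_{S_\rho}t^2\dd\sigma>0 .
\]
Thus for $m=2$ the two summands are not orthogonal in $L^2(S_\rho,\sigma)$.
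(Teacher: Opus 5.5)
Your reduction is sound: since $\dim H_m(\Hh)=m+1$ and multiplication by $w$ is injective, everything comes down to injectivity of $L=A_{m-2}\colon q\mapsto\Delta_H(wq)$. That is exactly the paper's \cref{lem:Fischer-auto}. Your formula for $L$ is correct, and your witness $\langle t,w\rangle_\sigma=4\int_{S_\rho}t^2\dd\sigma>0$ is correct and essentially the paper's own example. The gap is that this injectivity, which is the whole content of the theorem, is not proved, and the triangular structure you intend to use does not exist. In a fixed weight $k=a-b$, write $e_c=z^a\overline z^{\,b}t^c$, where the index is the $t$-exponent within the block and $e_{c'}=0$ out of range. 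Your formula then gives
\[
  Le_c=16ab\,e_{c+1}+\bigl(4(a+1)(b+1)-16\imath k(c+1)\bigr)e_c
  +\bigl(16c(c+1)-4\imath kc\bigr)e_{c-1}+4c(c-1)\,e_{c-2}.
\]
So $L$ raises the $t$-degree by one (via $4t\,\Delta_z$, with $ab\neq0$ below the top monomial) and also lowers it by one or two. It is triangular in neither order: already for $k=0$ on $P_2(\Hh)$ one has $L|z|^2=16|z|^2+16t$ and $Lt=32|z|^2+4t$. Your lowest-degree argument therefore only shows that a kernel element of a given weight is determined by its two lowest coefficients, since the $t^s$-equation can be solved for the $t^{s+2}$-coefficient because $4(s+1)(s+2)\neq0$. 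The cases $c_0\in\{0,1\}$ are not an edge case but the entire problem. There the lowest component of $Lq$ also receives contributions from higher $q_c$ and can cancel, so there is no single leading coefficient to compute. Reordering by decreasing $t$-degree fails for the same reason.

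This is not a bookkeeping issue. For a multiplier $w_\lambda=|z|^2+\lambda t$ the matrix has the same band structure, and the same weight-zero diagonal entries $4(a+1)(b+1)>0$, for every $\lambda$. Yet for $\lambda=\sqrt2$,
\[
  t^2-\tfrac12|z|^4=\tfrac1{\sqrt2}\bigl(|z|^2+\sqrt2\,t\bigr)\bigl(t-\tfrac1{\sqrt2}|z|^2\bigr)\in H_4(\Hh),
\]
so $q\mapsto\Delta_H(w_{\sqrt2}q)$ is singular on $P_2(\Hh)$. Any proof must therefore control each full weight block and use the specific coefficient $4$ through the off-diagonal couplings.

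The paper does this by substituting $\tau=t+|z|^2/4$, so that $w=4\tau$ and $A_j$ becomes tridiagonal in the basis $\tau^\ell r^{j-2\ell}e^{\imath\nu\theta}$.
\begin{itemize}
\item For $\nu\neq0$, the off-diagonal products are positive. After a positive diagonal rescaling the off-diagonal part is real symmetric, while the diagonal entries have imaginary parts $-16\nu(\ell+1)$ of fixed sign, so $\operatorname{Im}\langle Bv,v\rangle=0$ forces $v=0$.
\item For $\nu=0$ the block is real and this argument fails. A kernel element would make $wq$ a multiple of the zonal harmonic $R^NP_N(t/R)$. This is excluded by evaluating at the point $(|z|^2,t)=(4,-1)$ of $\{w=0\}$ and using $17^{N/2}P_N(1/\sqrt{17})\equiv1\pmod 8$.
\end{itemize}
You need an argument of that strength, in particular for the weight-zero block.
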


The essential point is that the \emph{Fischer operator}
\[
  A_j:P_j(\Hh)\longrightarrow P_j(\Hh),
  \qquad
  A_jq=\Delta_H(wq),
\]
is an automorphism.~\cref{sec:eta-decomposition} develops the
decomposition and its Dirichlet interpretation, while
\ref{sec:fischer-appendix} contains the proof of
invertibility.

Finally, we state the general Carnot-group conclusion. Let $\mathbb G$ be a Carnot group with a
fixed sub-Laplacian $\mathcal L$, let $N$ be a continuous proper homogeneous
gauge, and let $\sigma_N$ be its dilation-polar measure on the corresponding unit sphere $S_N=\{N=1\}$.
Denoting by $P_m(\mathbb G)$ the weighted homogeneous coordinate
polynomials, set
\[
  H_m(\mathbb G)
  =\ker\bigl(\mathcal L|_{P_m(\mathbb G)}\bigr),
  \qquad
  H_m(S_N)
  =\bigl\{h|_{S_N} \ : \ h\in H_m(\mathbb G) \bigr\}.
\]
For these harmonic trace spaces, set
\[
  \mathcal E_m^N
  =\bigoplus_{j=0}^m{}^{\mathrm{alg}}H_j(S_N),
  \qquad
  \mathcal K_m^N
  =\mathcal E_m^N\cap(\mathcal E_{m-1}^N)^\perp,
  \qquad
  \mathscr H_N
  =\overline{\bigcup_{m\geq0}\mathcal E_m^N}^{\,L^2}.
\]
Here $\mathcal E_{-1}^N=\{0\}$. For all weighted polynomial traces, set
\[
  \mathscr A_m=\bigoplus_{j=0}^mP_j(\mathbb G),
  \qquad
  \mathscr V_m^N
  =\bigl\{p|_{S_N} \ : \ p\in\mathscr A_m \bigr\},
  \qquad
  \mathscr W_m^N
  =\mathscr V_m^N\cap(\mathscr V_{m-1}^N)^\perp,
\]
with the usual convention that $\mathscr V_{-1}^N=\{0\}$.
In the Carnot setting, we require an assumption. We denote by $(\mathrm{HD})_N$ the \emph{harmonic-density condition}
\[
  (\mathrm{HD})_N:
  \qquad
  \mathscr H_N=L^2(S_N,\sigma_N).
\]

\begin{theorem}
\label{thm:Carnot-L2-intro}
With the notation above,
\[
  \mathscr H_N
  =\widehat{\bigoplus_{m=0}^{\infty}}\mathcal K_m^N,
  \qquad
  \dim\mathcal K_m^N=\dim H_m(\mathbb G).
\]
This is a decomposition of $L^2(S_N,\sigma_N)$ iff the
separate harmonic-density condition $(\mathrm{HD})_N$ holds. 

By contrast,
the all-polynomial filtration is complete without any additional assumption. In other words, it holds
\[
  L^2(S_N,\sigma_N)
  =\widehat{\bigoplus_{m=0}^{\infty}}\mathscr W_m^N.
\]
\end{theorem}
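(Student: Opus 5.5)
The argument splits into three parts. First comes an abstract Hilbert-space lemma about increasing filtrations by finite-dimensional subspaces. Then we need injectivity of restriction on each $H_m(\mathbb G)$ together with algebraic directness of the traces, which gives the dimension count. Finally, Stone--Weierstrass handles the all-polynomial filtration.

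\emph{Abstract lemma.} Let $V_0\subseteq V_1\subseteq\cdots$ be finite-dimensional subspaces of a Hilbert space $\mathcal H$, with $V_{-1}=\{0\}$, and set $W_m=V_m\cap V_{m-1}^\perp$. Each $V_{m-1}$ is finite-dimensional, hence closed, so orthogonal projection gives $V_m=V_{m-1}\oplus W_m$ orthogonally. By induction, $V_m=W_0\oplus\cdots\oplus W_m$ with pairwise orthogonal summands, because $W_j\subseteq V_j\subseteq V_{m-1}\perp W_m$ for $j<m$. Hence $\overline{\bigcup V_m}=\widehat\bigoplus W_m$, and we also get $\dim W_m=\dim V_m-\dim V_{m-1}$. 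Applying this with $V_m=\mathcal E^N_m$ gives the identity $\mathscr H_N=\widehat\bigoplus\mathcal K^N_m$ immediately. By the definition of $(\mathrm{HD})_N$, this sum equals $L^2(S_N,\sigma_N)$ exactly when $(\mathrm{HD})_N$ holds, so the ``iff'' is tautological once the decomposition is established. One measure-theoretic point must be checked here: $L^2$ classes of distinct continuous functions are distinct. This requires $\sigma_N$ to have full support on $S_N$, which should follow from the polar-coordinate construction, since $\sigma_N(E)$ is a multiple of the Haar measure of the cone $\{\delta_r x: x\in E, 0<r\le 1\}$, and that cone has nonempty interior whenever $E$ is relatively open.

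\emph{Dimension count.} This is where the real work lies. It suffices to show two things: that restriction $H_m(\mathbb G)\to C(S_N)$ is injective, and that the traces $\{H_j(S_N)\}$ form an algebraically direct family. The lemma then gives $\dim\mathcal K_m^N=\dim H_m(\mathbb G)$. Both facts reduce to one claim: if $h_j\in P_j(\mathbb G)$ for $0\le j\le m$ and $\sum_j h_j=0$ on $S_N$, then every $h_j$ vanishes. Note that harmonicity is not needed for this. My first attempt would use dilations. On $S_N$ we have $\sum_j h_j(\xi)=0$, but this single relation does not suffice, because $\sum_j r^jh_j(\xi)$ is not constrained for $r\ne1$. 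For general polynomials the claim is false; the gauge vanishing ideal discussed in the abstract is exactly the obstruction. So harmonicity must enter, and the correct route is the maximum principle. Set $u=\sum_j h_j$. Then $u$ is $\mathcal L$-harmonic on the gauge ball $B_N=\{N<1\}$, continuous on its closure, and zero on $S_N=\partial B_N$. Bony's strong maximum principle for H\"ormander sums of squares, or the weak maximum principle obtained by perturbing with $\varepsilon$ times a strictly subharmonic polynomial such as the squared norm of the first-layer coordinates, then gives $u\equiv0$ on $B_N$. Since $B_N$ is open, $u$ vanishes as a polynomial. Its weighted-homogeneous components are uniquely determined, so each $h_j=0$. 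I expect this maximum-principle step, including the check that $B_N$ is bounded and open and that $S_N$ is its topological boundary, to be the main obstacle. Properness and continuity of $N$ should settle it.

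\emph{All-polynomial filtration.} Here $\mathscr V_m^N$ is finite-dimensional and increasing, so the lemma gives $\overline{\bigcup\mathscr V^N_m}=\widehat\bigoplus\mathscr W^N_m$. The union consists of the restrictions of all polynomials in the coordinates. This is a unital algebra that separates points and is closed under conjugation, and $S_N$ is compact because $N$ is proper and continuous. Stone--Weierstrass therefore makes the union uniformly dense in $C(S_N)$. Since $\sigma_N$ is a finite Radon measure on a compact set, $C(S_N)$ is dense in $L^2(S_N,\sigma_N)$, and so the closure is all of $L^2$. No dimension claim is made for $\mathscr W^N_m$, so no injectivity is required in this part.
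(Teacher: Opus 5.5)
Your proposal is correct and follows the paper's route essentially step for step. The steps are: orthogonal increments of a finite-dimensional filtration; full support of $\sigma_N$ via open dilation cones; algebraic directness of the harmonic traces via a maximum principle on $B_N$; and Stone--Weierstrass for the all-polynomial filtration. The paper uses Bony's strong maximum principle on $\pm\operatorname{Re}u$, $\pm\operatorname{Im}u$. Your $\varepsilon$-perturbed weak principle also works, since $\mathcal L$ applied to the squared first-layer norm is a positive constant, and it avoids needing connectedness of $B_N$.

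In the write-up, state the key claim for $h_j\in H_j(\mathbb G)$ and delete the remark that harmonicity is not needed. As you yourself observe, that version fails for general homogeneous polynomials: e.g.\ $(|z|^4+t^2)-1$ vanishes on the Kor\'anyi sphere. Only injectivity on a single $P_m(\mathbb G)$ is harmonicity-free.
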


The proof of this common principle appears once for the general case, in~\cref{sec:carnot-filtrations}. The Heisenberg conclusions then use
the Greiner--Koornwinder completeness result on top of it.

\subsection{A Euclidean-style decomposition on \texorpdfstring{$\Hh$}{H}}

As mentioned above, in the Euclidean setting every homogeneous polynomial $p_m$ 
of degree $m$ admits a unique decomposition
\[
  p_m(x)=h_m(x)+|x|^2q_{m-2}(x),
\]
where $h_m$ is harmonic and homogeneous of degree $m$. Equivalently,
\[
  P_m(\mathbb R^d)
  =\mathcal H_m(\mathbb R^d)\oplus|x|^2P_{m-2}(\mathbb R^d),
\]
where $\mathcal H_m$ denotes the Euclidean harmonic space.

This suggests looking for a degree-two multiplier on $\Hh$. A natural
first expression is
\[
  \eta(z,t)=\bigl(|z|^2+4|t|\bigr)^{1/2},
\]
but the absolute value prevents a polynomial factorization and is non-smooth
across $\{t=0\}$. We therefore use the homogeneous polynomial
\[
  w(z,t)=\eta_+^2(z,t)=|z|^2+4t.
\]
Only $w$ is used globally; the notation $\eta_+=\sqrt w$ makes sense as a
real function only on the patch $\{w>0\}$.

\cref{thm:eta-plus-decomposition-intro} is the precise Heisenberg
analogue of the Euclidean Fischer decomposition. Its proof is purely
algebraic and does not use the non-polynomial quantity
\[
  \rho^2(z,t)=\bigl(|z|^4+t^2\bigr)^{1/2}.
\]
Moreover, the Fischer automorphisms give a polynomial solution of a
Dirichlet problem on the unbounded paraboloid $\{w=1\}$.
The underlying degree-two correction is analogous to the construction of Kogoj
and Lanconelli for the heat operator under the same anisotropic scaling
\cite[Thm.~1.1]{KogojLanconelli2022}, although the operators and geometries
are different.

It is important not to confuse the two main themes: the Fischer sum
\eqref{eq:intro-fischer} is algebraic rather than orthogonal, while the
spherical Hilbert sum \eqref{eq:intro-corrected-Hn} is produced by
orthogonalizing a cumulative trace filtration.

\subsection{Contributions and organization of the paper}

The objective of this section is to clarify the contributions of this paper in comparison to the existing literature:

\begin{itemize}[leftmargin=*]
\item \emph{A polynomial realization of Heisenberg spherical
analysis.}
The homogeneous harmonic traces on $S_\rho^{(n)}$ are algebraically
direct and complete. Here, we give a measure-independent degree-one/degree-three
obstruction to their pairwise orthogonality and replace the homogeneous degree
Hilbert sum by the orthogonal increments $\mathcal K_m^{(n)}$. 

\item \emph{A Heisenberg analogue of the Euclidean Fischer decomposition.}
We establish the factorization
\[
  P_m(\Hh)=H_m(\Hh)\oplus\eta_+^2P_{m-2}(\Hh)
\]
as a unique algebraic decomposition. The proof relies primarily on the automorphisms
$q\mapsto\Delta_H(\eta_+^2q)$, and the same mechanism produces polynomial as a byproduct
Dirichlet corrections on $\{\eta_+^2=1\}$.

\item \emph{A precise general Carnot-group formulation.}
The harmonic filtration always decomposes its closed trace span and fills
the whole spherical $L^2$ space exactly when $(\mathrm{HD})_N$ holds. The filtration by all weighted polynomial traces is, on the other hand, always complete by Stone--Weierstrass.
\end{itemize}

The paper is organized as follows.
\cref{sec:prelim} gives the full
Heisenberg conventions, the sub-Laplacian, and the Kor\'anyi polar measure.
\cref{sec:carnot-filtrations} proves the common filtration argument.
\cref{sec:spherical,sec:higher-heisenberg} give the first
and higher-dimensional Heisenberg specializations. The algebraic
$\eta_+^2$ decomposition and its Dirichlet interpretation are developed in~\cref{sec:eta-decomposition}.  

Finally,~\ref{sec:Hn-harmonic-basis}
contains the constructive harmonic basis, while \ref{sec:fischer-appendix} proves the Fischer automorphism.

\section{Heisenberg groups, the sub-Laplacian, and polar measure}
\label{sec:prelim}

We now give the complete conventions summarized in~\cref{sec:heisenberg-intro}. The first Heisenberg group is
$\Hh=\Hh^1=\C\times\R$ with product
\[
  (z,t)(z',t')
  =\bigl(z+z',t+t'+2\operatorname{Im}(z\overline{z'})\bigr).
\]
More generally, on $\Hh^n=\C^n\times\R$ we use
\[
  (z,t)(z',t')
  =\bigl(z+z',t+t'
     +2\operatorname{Im}(z\cdot\overline{z'})\bigr),
  \qquad
  z\cdot\overline{z'}
  =\sum_{j=1}^n z_j\overline{z_j'}.
\]
Writing $z=x+\imath y$ with $x,y\in\R^n$, the product is equivalent to
\[
  (x,y,t)(x',y',t')
  =\bigl(x+x',y+y',
     t+t'+2(x'\cdot y-x\cdot y')\bigr),
\]
where $x \cdot y$ denotes the standard Euclidean product. Let $T=\partial_t$. The real left-invariant horizontal vector fields are
\[
  X_j=\partial_{x_j}+2y_j\partial_t,
  \qquad
  Y_j=\partial_{y_j}-2x_j\partial_t,
  \qquad 1\leq j\leq n.
\]
On the other hand, with the standard Wirtinger derivatives, the corresponding complex vector fields are
\[
  Z_j=\frac12(X_j-\imath Y_j)
     =\partial_{z_j}+\imath \overline z_j\,\partial_t,
  \qquad
  \overline Z_j=\frac12(X_j+\imath Y_j)
     =\partial_{\overline z_j}-\imath z_j\,\partial_t.
\]
The only nonzero commutators among these generators are
\[
  [X_j,Y_\ell]=-4\delta_{j\ell}T,
  \qquad
  [Z_j,\overline Z_\ell]
  =-2\imath \, \delta_{j\ell}T.
\]
Thus, there is a step-two stratification
\[
  \mathfrak h_n=V_1\oplus V_2,
  \qquad
  V_1=\operatorname{span}\bigl\{X_j,Y_j \ : \ 1\leq j\leq n\bigr\},
  \qquad
  V_2=\operatorname{span}\{T\}.
\]
The dilations are
\[
  \delta_s(z,t)=(sz,s^2t),
\]
and the homogeneous dimension of $\Hh^n$ is $Q=2n+2$. In this paper, we use the analytic-sign sub-Laplacian
\[
  \Delta_H
  =\sum_{j=1}^n(X_j^2+Y_j^2)
  =2\sum_{j=1}^n
      (Z_j\overline Z_j+\overline Z_jZ_j).
\]
Writing
\[
  \Delta_z
  =\sum_{j=1}^n
      (\partial_{x_j}^2+\partial_{y_j}^2),
  \qquad
  \mathcal R
  =\sum_{j=1}^n
      (y_j\partial_{x_j}-x_j\partial_{y_j}),
\]
one obtains the decomposition:
\begin{equation}\label{eq:Hn-expanded-Laplacian}
  \Delta_H
  =\Delta_z+4\mathcal R \, \partial_t+4|z|^2\partial_t^2.
\end{equation}
For $n=1$, using exponential coordinates $z=re^{\imath \theta}$ yields
\begin{equation}\label{eq:Delta-polar-rt}
  \Delta_H
  =\partial_r^2+\frac1r\partial_r+\frac1{r^2}\partial_\theta^2
   -4\partial_\theta\partial_t+4r^2\partial_t^2.
\end{equation}
The fields satisfy H\"ormander's bracket-generating condition. Hence, $\Delta_H$ is hypoelliptic, although it is not elliptic.
For the present
normalization, the positive operator $-\Delta_H$ has a fundamental solution
of the form
\[
  \Phi_n(z,t)
  =c_n\bigl(|z|^4+t^2\bigr)^{-n/2}
  \qquad c_n>0,
\]
as shown in~\cite{Folland1973,Folland1975}.
The value of $c_n$ depends on
the normalization of the operator and will not be needed below. The
fundamental solution is homogeneous of degree $2-Q = -2n$.

The weights in~\cref{def:homogeneous} are therefore the natural
ones: the horizontal variables have degree one, $t$ has degree two, and
\[
  \Delta_H P_m(\Hh^n)\subseteq P_{m-2}(\Hh^n).
\]
The Kor\'anyi--Folland gauge and its unit sphere are
\[
  \rho(z,t)=\bigl(|z|^4+t^2\bigr)^{1/4},
  \qquad
  S_\rho^{(n)}=\bigl\{(z,t) \ : \ \rho(z,t)=1\bigr\}.
\]
For each $n\geq1$, the dilation-polar measure $\sigma_n$ on
$S_\rho^{(n)}$ is normalized by
\begin{equation}\label{eq:Hn-polar-formula}
  \int_{\Hh^n}f(z,t)\,\dd z \dd t
  =\int_0^\infty\int_{S_\rho^{(n)}}
       f(\delta_s\omega)s^{2n+1}
       \dd\sigma_n(\omega)\dd s.
\end{equation}
For $n=1$ we abbreviate $S_\rho^{(1)}$ to $S_\rho$ and write
$\sigma=\sigma_1$. Thus~\eqref{eq:Hn-polar-formula} becomes
\begin{equation}\label{eq:polar-formula}
  \int_{\Hh}f(z,t)\,\dd z \dd t
  =\int_0^\infty\int_{S_\rho}
       f(\delta_s\omega)s^3
       \dd\sigma(\omega)\dd s,
\end{equation}
compare it with~\cite[Chapter~1]{FollandStein1982}. The parametrization
\begin{equation}\label{eq:sphere-param}
  (s,\phi,\theta)\mapsto
  \bigl(s\sqrt{\sin\phi}\,e^{\imath \theta},
        s^2\cos\phi\bigr),
  \quad
  s>0,\quad 0<\phi<\pi,\quad 0\leq\theta<2\pi,
\end{equation}
has Euclidean Jacobian $s^3$. Hence, with the normalization above,
\begin{equation}\label{eq:sigma-explicit}
  \dd\sigma=\dd\phi \dd\theta.
\end{equation}
The two characteristic points omitted by~\eqref{eq:sphere-param} have
$\sigma$-measure zero.

\begin{remark}
Rotations $R_\alpha(z,t)=(e^{\imath \alpha}z,t)$ preserve
$\Delta_H$, $\rho$, and $\sigma$, providing the genuine elementary
orthogonal splitting of the first spherical $L^2$ space.
\end{remark}

The corresponding
explicit higher-dimensional density and its $U(n)$ invariance are discussed in details
in~\cref{sec:higher-heisenberg}.

\section{Polar measure and polynomial filtrations on Carnot groups}
\label{sec:carnot-filtrations}

In this section, we prove the general filtration principle that will underlie the
Heisenberg specializations. Here, there are two different conclusions:
\begin{itemize}
    \item Harmonic polynomial traces always admit an orthogonal decomposition of their \emph{closed span}. Identifying it with the whole spherical $L^2$ space, however, requires a separate completeness theorem.
    \item The filtration by all weighted polynomial traces is always dense, by the Stone--Weierstrass theorem.
\end{itemize}

Let $\mathbb G$ be a connected, simply connected Carnot group with stratified
Lie algebra $\mathfrak g=V_1\oplus\cdots\oplus V_s$ satisfying
\[
  [V_1,V_j]=V_{j+1}\quad(1\leq j<s),
  \qquad [V_1,V_s]=\{0\}.
\]
Denote the dimension of each layer by $d_j=\dim V_j$ so that
\[
  Q=\sum_{j=1}^s j d_j
\]
is the homogeneous dimension. Choose exponential coordinates
$x=(x_{j,k})$, with $1\leq j\leq s$, $1\leq k\leq d_j$, adapted to the
stratification. Then
\[
  \delta_r(x_{j,k})=(r^j x_{j,k}),
  \qquad r>0,
\]
and the weighted degree of $x_{j,k}$ is $j$. With complex coefficients, let
\[
  P_m(\mathbb G)
  =\bigl\{p \ : \ p\circ\delta_r=r^m p\bigr\},
  \qquad
  \mathscr A_m=\bigoplus_{j=0}^m P_j(\mathbb G),
\]
with $\mathscr A_m=\{0\}$ for $m<0$. Let $X_1,\ldots,X_{d_1}$ be the
left-invariant vector fields corresponding to a basis of the first (aka horizontal) layer $V_1$, and define
\[
  \mathcal L :=\sum_{a=1}^{d_1}X_a^2,
  \qquad
  H_m(\mathbb G)=\ker\bigl(\mathcal L|_{P_m(\mathbb G)}\bigr).
\]
In adapted exponential coordinates the fields $X_a$ have polynomial
coefficients and homogeneous degree $-1$. In particular,
\[
  \mathcal L P_m(\mathbb G)\subseteq P_{m-2}(\mathbb G).
\]
Let $N$ be a continuous proper homogeneous gauge, namely a function satisfying $N(e)=0$,
$N(x)>0$ for $x\neq e$, and $N(\delta_r x)=rN(x)$. Write
\[
  S_N=\bigl\{x \ : \ N(x)=1 \bigr\},
  \qquad B_N=\bigl\{x \ : \ N(x)<1 \bigr\}.
\]
Then $S_N$ is compact and the map
\[
  (0,\infty)\times S_N\longrightarrow\mathbb G\setminus\{e\},
  \qquad (r,\omega)\longmapsto\delta_r\omega,
\]
is a homeomorphism.
Moreover, the unit ball $B_N$ is bounded and dilation-star-shaped with respect to $e$; in particular, it is connected.

\begin{proposition}[Polar measure and its support]
\label{prop:carnot-polar-support}
There is a finite Radon measure $\sigma_N$ on $S_N$ such that
\begin{equation}\label{eq:carnot-polar}
  \int_{\mathbb G}f(x)\dd x
  =\int_0^\infty\int_{S_N}f(\delta_r\omega)
     \dd\sigma_N(\omega)r^{Q-1}\dd r
\end{equation}
for every $f\in C_c(\mathbb G\setminus\{e\})$. Moreover,
$\operatorname{supp}\sigma_N=S_N$.
\end{proposition}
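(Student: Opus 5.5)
The plan is to define $\sigma_N$ by pulling back Haar (Lebesgue) measure through the polar homeomorphism $\Psi(r,\omega)=\delta_r\omega$. For a Borel set $E\subseteq S_N$ we form the cone segment
\[
  C(E)=\{\delta_r\omega \ : \ 0<r\leq1,\ \omega\in E\}=\Psi\bigl((0,1]\times E\bigr)
\]
and set
\[
  \sigma_N(E):=Q\,|C(E)|,
\]
where $|\cdot|$ is Lebesgue measure in exponential coordinates. Because $\Psi$ is a homeomorphism, $C(E)$ is Borel, and $E\mapsto C(E)$ preserves disjoint unions; hence $\sigma_N$ is a Borel measure. It is finite because $C(S_N)=\overline{B_N}\setminus\{e\}$ is bounded. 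As $S_N$ is compact metrizable, $\sigma_N$ is automatically Radon.

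The key scaling fact is that $\delta_s$ has Jacobian $s^Q$ in adapted coordinates, since $\delta_s$ is diagonal with entries $s^j$ repeated $d_j$ times. This gives
\[
  \bigl|\Psi((a,b]\times E)\bigr|=|\delta_b C(E)|-|\delta_a C(E)|=(b^Q-a^Q)\,|C(E)|=\int_a^b\sigma_N(E)\,r^{Q-1}\dd r .
\]
So the pushforward of Lebesgue measure under $\Psi^{-1}$ agrees with the product measure $r^{Q-1}\dd r\otimes\sigma_N$ on rectangles $(a,b]\times E$. These rectangles form a $\pi$-system generating the Borel $\sigma$-algebra of $(0,\infty)\times S_N$, and both measures are $\sigma$-finite on the exhausting sets $(1/k,k]\times S_N$. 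By the uniqueness theorem for measures the two agree on all Borel sets. Then~\eqref{eq:carnot-polar} holds for indicators, hence for nonnegative Borel $f$ by monotone convergence, and in particular for $f\in C_c(\mathbb G\setminus\{e\})$. The origin is Lebesgue-null, so it does not affect the left-hand side.

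For full support, we take $\omega_0\in S_N$ and an open neighbourhood $U\subseteq S_N$ of it. Then $\Psi\bigl((1/2,1)\times U\bigr)$ is open in $\mathbb G\setminus\{e\}$, because $\Psi$ is a homeomorphism, and it is nonempty. A nonempty open subset of $\mathbb R^{\dim\mathbb G}$ has positive Lebesgue measure, so
\[
  0<\bigl|\Psi((1/2,1)\times U)\bigr|=\frac{1-2^{-Q}}{Q}\,\sigma_N(U).
\]
Hence $\sigma_N(U)>0$, and therefore $\operatorname{supp}\sigma_N=S_N$.

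The main obstacle is measure-theoretic bookkeeping rather than any real difficulty. One point to check is that the cone sets are Borel and that rectangles generate the product Borel $\sigma$-algebra; this holds because both factors are second countable. The other point is that the Jacobian identity $|\delta_s A|=s^Q|A|$ is valid in the chosen coordinates, i.e.\ that Haar measure is Lebesgue measure in exponential coordinates. This is standard for nilpotent groups and can be assumed.
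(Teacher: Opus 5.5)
Your proposal is correct and takes essentially the same route as the paper. The paper transports Haar measure through $(r,\omega)\mapsto\delta_r\omega$ using $|\delta_rE|=r^Q|E|$, citing Folland--Stein, whose construction is exactly your $\sigma_N(E)=Q|C(E)|$; it then proves full support by the same positive-measure argument for open cone segments $\{\delta_r\omega : a<r<b,\ \omega\in U\}$. Your $\pi$-system uniqueness step just writes out the measure-theoretic details that the paper delegates to the reference and to a monotone-class argument.
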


\begin{proof}
The polar formula~\eqref{eq:carnot-polar} follows by transporting Haar measure through the displayed
homeomorphism and using
\[
|\delta_r E|=r^Q|E|.
\]
This is the usual homogeneous-group polar
integration formula; refer to~\cite[Chapter~1]{FollandStein1982}. A monotone-class
argument extends the identity from compactly supported continuous functions
to nonnegative Borel functions.  

We verify the support statement, which will
be important below. If $U\subseteq S_N$ is nonempty
and relatively open and $0<a<b$, then
\[
  \mathcal C_{a,b}(U)
  := \bigl\{\delta_r\omega \ : \ a<r<b,\ \omega\in U \bigr\}
\]
is a nonempty open subset of $\mathbb G$, hence has positive Haar measure.
Applying~\eqref{eq:carnot-polar} to the indicator function of $\mathcal C_{a,b}(U)$ gives
\[
  |\mathcal C_{a,b}(U)|
  =\sigma_N(U)\int_a^b r^{Q-1}\dd r.
\]
Thus $\sigma_N(U)>0$. Since this holds for every nonempty relatively open
$U$, the measure has full support.
\end{proof}

We regard every trace space below as a subspace of
$L^2(S_N,\sigma_N)$. Notice that full support implies that a continuous function on
$S_N$ which represents the zero $L^2$ class vanishes everywhere.

\begin{proposition}[Injectivity and algebraic directness]
\label{prop:carnot-harmonic-direct}
For every $m\geq0$, restriction to $S_N$ is injective on
$P_m(\mathbb G)$, and hence on $H_m(\mathbb G)$. In addition, the trace
spaces
\[
  H_m(S_N)= \bigl\{h|_{S_N} \ : \ h\in H_m(\mathbb G) \bigr\}
\]
form an algebraically direct family. In other words,
\[
  \sum_{m=0}^M h_m|_{S_N}=0,
  \quad h_m\in H_m(\mathbb G)
  \implies
  h_0=\cdots=h_M=0.
\]
\end{proposition}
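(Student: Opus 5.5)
The plan is to use homogeneity to move information from the sphere to all of $\mathbb G\setminus\{e\}$. Everything follows from one observation: the map $(r,\omega)\mapsto\delta_r\omega$ is a homeomorphism onto $\mathbb G\setminus\{e\}$, so every $x\neq e$ can be written as $x=\delta_r\omega$ with $r=N(x)>0$ and $\omega\in S_N$.

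\emph{Injectivity.} Let $p\in P_m(\mathbb G)$ with $p|_{S_N}=0$. For $x\neq e$, writing $x=\delta_r\omega$ gives
\[
p(x)=r^m p(\omega)=0 .
\]
Hence $p$ vanishes on the dense open set $\mathbb G\setminus\{e\}$. Since $p$ is a polynomial in the exponential coordinates, it is the zero polynomial. Because $H_m(\mathbb G)\subseteq P_m(\mathbb G)$, injectivity on $H_m(\mathbb G)$ follows at once.

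\emph{Algebraic directness.} Suppose $\sum_{m=0}^M h_m|_{S_N}=0$, and set $p=\sum_m h_m$, a polynomial. Fix $\omega\in S_N$ and consider the one-variable function
\[
r\mapsto p(\delta_r\omega)=\sum_{m=0}^M r^m h_m(\omega).
\]
This function can be read off on the sphere only at $r=1$, so the hypothesis as stated gives only a single value. The trace spaces $H_m(S_N)$ are sets of functions on $S_N$, so we cannot pass to other radii directly. I expect this to be the main obstacle.

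The way around it is to exploit more structure, namely harmonicity together with homogeneity of the degrees. My first attempt is to apply $\mathcal L$. Set $u=\sum_m h_m$. Then $u$ is $\mathcal L$-harmonic on $\mathbb G$, since each $h_m$ is, and $u$ vanishes on $S_N$. The unit ball $B_N$ is bounded and open with $\partial B_N\subseteq S_N$: continuity of $N$ gives $\overline{B_N}\subseteq\{N\leq1\}$, and the strict inequality $N<1$ is open. Apply the Bony weak maximum principle for the hypoelliptic operator $\mathcal L$ to $\operatorname{Re}u$ and $\operatorname{Im}u$, which are also harmonic because $\mathcal L$ has real coefficients. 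This gives $u\equiv0$ on $B_N$. Since $u$ is a polynomial, it vanishes identically on $\mathbb G$.

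Once $u\equiv 0$, split it into its homogeneous components. For each fixed $x$,
\[
0=u(\delta_r x)=\sum_m r^m h_m(x)\qquad\text{for all } r>0 .
\]
The functions $r^m$ are linearly independent, so $h_m(x)=0$ for every $m$ and every $x$. This gives $h_0=\dots=h_M=0$.

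The delicate points to check are these. First, that the maximum principle applies on $B_N$, an arbitrary bounded open set, for sums of squares of Hörmander vector fields; this is Bony's theorem. Second, that real-valued continuous functions attain their extrema on the compact set $\overline{B_N}$, with $u$ continuous up to the boundary because it is a polynomial.
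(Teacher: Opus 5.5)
Your proof is correct and follows essentially the same route as the paper. Injectivity comes from homogeneity. Directness comes from Bony's maximum principle on $B_N$, applied to $\pm\operatorname{Re}u$ and $\pm\operatorname{Im}u$ for $u=\sum_m h_m$, which vanishes on $\partial B_N\subseteq S_N$; a polynomial vanishing on an open set is then zero, and uniqueness of the weighted homogeneous expansion finishes the argument. The only cosmetic difference is that you invoke the weak maximum principle, so connectedness of $B_N$ is never needed, whereas the paper uses the strong principle on the connected ball.
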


\begin{proof}
If $p\in P_m(\mathbb G)$ vanishes on $S_N$, then, for $x\neq e$, homogeneity gives
\[
  p(x)=N(x)^m \,
  p\bigl(\delta_{N(x)^{-1}}x\bigr)=0.
\]
Thus $p$ vanishes identically, proving injectivity.

For directness, set $u=\sum_{m=0}^M h_m$.  The operator $\mathcal L$ is
linear, so $\mathcal Lu=0$ in the bounded connected set $B_N$, and the
assumed trace identity gives 
\[
u=0 \qquad \text{on }\partial B_N=S_N.
\]
The vector fields
$X_1,\ldots,X_{d_1}$ satisfy H\"ormander's bracket-generating condition by
stratification. Bony's strong maximum principle~\cite{Bony1969}, applied to
$\pm\operatorname{Re}u$ and $\pm\operatorname{Im}u$, gives $u=0$ in $B_N$.
Hence $u$ vanishes identically.
Uniqueness of the weighted homogeneous expansion gives $h_m=0$ for
each $m$.
\end{proof}

The proposition allows an orthogonalization which does not assume a
general harmonic-completeness theorem. Specifically, set
\[
  \mathcal E_{-1}=\{0\},
  \qquad
  \mathcal E_m=\bigoplus_{j=0}^m{}^{\mathrm{alg}}H_j(S_N),
  \qquad
  \mathcal K_m=\mathcal E_m\cap\mathcal E_{m-1}^{\perp},
\]
and define the closed harmonic-trace span
\[
  \mathscr H_N
  =\overline{\bigcup_{m\geq0}\mathcal E_m}^{\,L^2(S_N,\sigma_N)}.
\]

\begin{theorem}[Harmonic filtration]
\label{thm:carnot-harmonic-filtration}
For every $m\geq0$,
\[
  \mathcal E_m
  =\mathcal E_{m-1}\mathbin{\widehat\oplus}\mathcal K_m,
  \qquad
  \dim\mathcal K_m=\dim H_m(\mathbb G),
\]
and
\begin{equation}\label{eq:carnot-harmonic-closed-span}
  \mathscr H_N
  =\widehat{\bigoplus_{m=0}^{\infty}}\mathcal K_m.
\end{equation}
If $\Pi_{m-1}$ is the orthogonal projection onto
$\mathcal E_{m-1}$, then the restriction
\[
  (I-\Pi_{m-1})|_{H_m(S_N)} :
  H_m(S_N)\longrightarrow\mathcal K_m
\]
is an isomorphism. Moreover, the sum
\eqref{eq:carnot-harmonic-closed-span} equals all of
$L^2(S_N,\sigma_N)$ if and only if the condition
$(\mathrm{HD})_N$ holds:
\begin{equation}\label{eq:carnot-HD}
  \overline{\bigoplus_{m\geq0}^{\mathrm{alg}}H_m(S_N)}^{\,L^2}
  =L^2(S_N,\sigma_N).
\end{equation}
\end{theorem}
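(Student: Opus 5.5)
The argument is finite-dimensional linear algebra inside $L^2(S_N,\sigma_N)$, resting on \cref{prop:carnot-harmonic-direct}. First I will record that each $\mathcal E_m$ is finite dimensional. Indeed $P_m(\mathbb G)$ is spanned by finitely many weighted monomials, so $H_m(\mathbb G)$ is finite dimensional. By injectivity of restriction and algebraic directness of the family $\{H_j(S_N)\}$,
\[
  \dim\mathcal E_m=\sum_{j=0}^m\dim H_j(\mathbb G).
\]
Finite-dimensional subspaces of a Hilbert space are closed. Hence $\mathcal E_{m-1}$ is closed in $\mathcal E_m$, and $\mathcal E_m=\mathcal E_{m-1}\mathbin{\widehat\oplus}(\mathcal E_m\cap\mathcal E_{m-1}^\perp)$ is just the orthogonal complement decomposition of $\mathcal E_m$ relative to its subspace. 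Subtracting dimensions gives $\dim\mathcal K_m=\dim\mathcal E_m-\dim\mathcal E_{m-1}=\dim H_m(\mathbb G)$.

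For the isomorphism statement, I will check that $(I-\Pi_{m-1})$ maps $H_m(S_N)\subseteq\mathcal E_m$ into $\mathcal E_m\cap\mathcal E_{m-1}^\perp=\mathcal K_m$. It does, because $\Pi_{m-1}h\in\mathcal E_{m-1}\subseteq\mathcal E_m$. For injectivity, suppose $(I-\Pi_{m-1})h=0$. Then $h\in\mathcal E_{m-1}\cap H_m(S_N)=\{0\}$ by algebraic directness. Since $\dim H_m(S_N)=\dim H_m(\mathbb G)=\dim\mathcal K_m$, injectivity gives bijectivity.

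For \eqref{eq:carnot-harmonic-closed-span}, iterating the first decomposition shows $\mathcal E_m=\mathcal K_0\mathbin{\widehat\oplus}\cdots\mathbin{\widehat\oplus}\mathcal K_m$. The spaces $\mathcal K_j$ are mutually orthogonal: for $i<j$ we have $\mathcal K_i\subseteq\mathcal E_i\subseteq\mathcal E_{j-1}\perp\mathcal K_j$. So $\bigcup_m\mathcal E_m$ equals the algebraic orthogonal sum of the $\mathcal K_m$. Its closure is, by definition, the Hilbert direct sum $\widehat{\bigoplus}_m\mathcal K_m$, and this closure is $\mathscr H_N$.

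The final equivalence is then immediate, since $\bigcup_m\mathcal E_m=\bigoplus^{\mathrm{alg}}_m H_m(S_N)$. Thus the Hilbert sum is all of $L^2(S_N,\sigma_N)$ exactly when \eqref{eq:carnot-HD} holds. I do not expect a genuine obstacle. The only delicate input is algebraic directness, which relies on Bony's maximum principle and the full support of $\sigma_N$ from \cref{prop:carnot-polar-support}, and both are already established. The full-support fact is what makes restriction to $L^2$ classes injective, so the dimension count in $L^2$ matches the count of polynomials. I will state this point explicitly when identifying traces with $L^2$ classes.
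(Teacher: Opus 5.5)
Your proposal is correct and follows the paper's proof essentially step for step. Both arguments use finite-dimensionality of $\mathcal E_m$, the dimension count from algebraic directness in \cref{prop:carnot-harmonic-direct}, injectivity (hence bijectivity) of $I-\Pi_{m-1}$ on $H_m(S_N)$, and the closure of the increasing union; you spell out two points the paper leaves implicit or notes just before the theorem, namely the mutual orthogonality of the $\mathcal K_j$ and the use of full support of $\sigma_N$ to pass from $L^2$ vanishing to pointwise vanishing.
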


\begin{proof}
Each $\mathcal E_m$ is finite-dimensional and
$\mathcal E_{m-1}\subseteq\mathcal E_m$, so its orthogonal complement in
$\mathcal E_m$ is $\mathcal K_m$. Algebraic directness from
\cref{prop:carnot-harmonic-direct} gives
\[
  \dim\mathcal E_m-\dim\mathcal E_{m-1}
  =\dim H_m(\mathbb G),
\]
proving the dimension formula. It also shows that the displayed
projection map is injective and hence an isomorphism. Taking the closure of
the increasing union of the $\mathcal E_m$ proves
\eqref{eq:carnot-harmonic-closed-span}. Finally, the last statement is exactly the
definition of the density condition~\eqref{eq:carnot-HD}.
\end{proof}

Dropping the harmonicity requirement gives an unconditional complete orthogonal decomposition. More precisely, set
\[
  \mathscr V_m
  =\bigl\{p|_{S_N} \ : \ p\in\mathscr A_m \bigr\},
  \qquad
  \mathscr W_m=\mathscr V_m\cap\mathscr V_{m-1}^{\perp},
  \qquad
  \mathscr V_{-1}=\{0\}.
\]

\begin{theorem}[All-polynomial filtration]
\label{thm:carnot-all-polynomial}
For every continuous proper homogeneous gauge $N$,
\begin{equation}\label{eq:carnot-all-polynomial-sum}
  L^2(S_N,\sigma_N)
  =\widehat{\bigoplus_{m=0}^{\infty}}\mathscr W_m.
\end{equation}
More precisely,
\[
  \mathscr V_m
  =\mathscr V_{m-1}\mathbin{\widehat\oplus}\mathscr W_m,
  \qquad
  \dim\mathscr W_m=\dim\mathscr V_m-\dim\mathscr V_{m-1}.
\]
If $I(S_N)$ is the ideal of coordinate polynomials vanishing on $S_N$, then
\begin{equation}\label{eq:carnot-Hilbert-function}
  \dim\mathscr V_m
  =\dim\mathscr A_m-\dim\bigl(I(S_N)\cap\mathscr A_m\bigr).
\end{equation}
\end{theorem}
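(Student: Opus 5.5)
The plan is to follow the pattern of \cref{thm:carnot-harmonic-filtration}. The one new input is density, which now comes from Stone--Weierstrass instead of an assumed condition.

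\emph{Step 1: the finite-dimensional part.} Each $\mathscr A_m$ is finite-dimensional, since there are finitely many monomials of weighted degree at most $m$. Hence each $\mathscr V_m$ is a finite-dimensional subspace of $L^2(S_N,\sigma_N)$. It is therefore closed, and $\mathscr V_{m-1}\subseteq\mathscr V_m$ because $\mathscr A_{m-1}\subseteq\mathscr A_m$. Consequently $\mathscr W_m$ is exactly the orthogonal complement of $\mathscr V_{m-1}$ inside $\mathscr V_m$, which gives
\[
\mathscr V_m=\mathscr V_{m-1}\mathbin{\widehat\oplus}\mathscr W_m,
\qquad
\dim\mathscr W_m=\dim\mathscr V_m-\dim\mathscr V_{m-1}.
\]
For \eqref{eq:carnot-Hilbert-function}, consider the restriction map $\mathscr A_m\to L^2(S_N,\sigma_N)$. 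Its image is $\mathscr V_m$. Since $\operatorname{supp}\sigma_N=S_N$ by \cref{prop:carnot-polar-support}, a continuous function is zero in $L^2$ exactly when it vanishes pointwise on $S_N$. So the kernel of the restriction map is $I(S_N)\cap\mathscr A_m$, and rank--nullity gives the formula.

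\emph{Step 2: density.} Iterating Step 1, the spaces $\mathscr W_0,\dots,\mathscr W_m$ are pairwise orthogonal with sum $\mathscr V_m$. The closure of $\bigcup_m\mathscr V_m$ is therefore $\widehat{\bigoplus}_m\mathscr W_m$, and it remains to show that $\bigcup_m\mathscr V_m$ is dense in $L^2(S_N,\sigma_N)$.

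Every polynomial in the coordinates lies in some $\mathscr A_m$, because it splits into weighted homogeneous parts. So $\bigcup_m\mathscr V_m$ is the algebra of restrictions to $S_N$ of all complex polynomials in the coordinates $x_{j,k}$. This algebra contains the constants and separates points, since the coordinate functions do. It is closed under complex conjugation, since conjugating the coefficients of a polynomial gives another polynomial and the coordinates are real.

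$S_N$ is compact. By the complex Stone--Weierstrass theorem the algebra is therefore uniformly dense in $C(S_N)$. Finally, $\sigma_N$ is a finite Radon measure on a compact metric space, so $C(S_N)$ is dense in $L^2(S_N,\sigma_N)$. Uniform convergence implies $L^2$ convergence because $\sigma_N(S_N)<\infty$. This gives \eqref{eq:carnot-all-polynomial-sum}.

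\emph{Main obstacle.} The argument is mostly routine. The only step needing care is the kernel identification in \eqref{eq:carnot-Hilbert-function}, which is where the full-support statement of \cref{prop:carnot-polar-support} is essential. Without it, $L^2$-vanishing would not imply pointwise vanishing on $S_N$, and the dimension count could be off.

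Note also that, unlike in \cref{prop:carnot-harmonic-direct}, restriction is generally \emph{not} injective on $\mathscr A_m$. When $N$ is a polynomial gauge, for instance $\rho^4-1$ on $\Hh^n$, it has nontrivial kernel. So the dimensions of the $\mathscr W_m$ must be computed through the ideal $I(S_N)$ rather than by simply summing $\dim P_j(\mathbb G)$.
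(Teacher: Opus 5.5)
Your proof is correct and follows essentially the same route as the paper. Stone--Weierstrass gives uniform density of the polynomial-trace algebra, which passes to $L^2$ because $\sigma_N$ is finite; consecutive finite-dimensional spaces $\mathscr V_{m-1}\subseteq\mathscr V_m$ are split orthogonally; and full support of $\sigma_N$ identifies the kernel of $\mathscr A_m\to\mathscr V_m$ with $I(S_N)\cap\mathscr A_m$. You also make explicit the density of $C(S_N)$ in $L^2(S_N,\sigma_N)$, which the paper leaves implicit.
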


\begin{proof}
The restrictions of coordinate polynomials form a unital self-conjugate
algebra which separates points of the compact set $S_N$. Thus, applying the
Stone--Weierstrass theorem gives
\[
  C(S_N)
  =\overline{\bigcup_{m\geq0}\mathscr V_m}^{\,\|\cdot\|_\infty}.
\]
Since $\sigma_N$ is finite, the same union is dense in $L^2$ as well. Orthogonally
splitting each pair of consecutive finite-dimensional spaces proves
\eqref{eq:carnot-all-polynomial-sum}.
Finally, full support of $\sigma_N$
shows that the kernel of
$\mathscr A_m\to\mathscr V_m$ is exactly
$I(S_N)\cap\mathscr A_m$, which proves
\eqref{eq:carnot-Hilbert-function}.
\end{proof}

For the Kor\'anyi sphere in every Heisenberg group, the dimensions in the
last theorem are explicit. Write
\[
  \Hh^n=\mathbb R^{2n}\times\mathbb R,
  \qquad
  \delta_r(z,t)=(rz,r^2t),
  \qquad
  \rho(z,t)=\bigl(|z|^4+t^2\bigr)^{1/4}.
\]

\begin{proposition}
\label{prop:Hn-W-dimensions}
Let $S_\rho^{(n)}=\{\rho=1\}$ and set
\[
\begin{aligned}
  \mathscr A_m^{(n)}&=\bigoplus_{j=0}^mP_j(\Hh^n),\\
  \mathscr V_m^{(n)}
    &=\bigl\{p|_{S_\rho^{(n)}} \ : \ p\in\mathscr A_m^{(n)}\bigr\},\quad
  \mathscr W_m^{(n)}
    =\mathscr V_m^{(n)}\cap
      \bigl(\mathscr V_{m-1}^{(n)}\bigr)^\perp.
\end{aligned}
\]
Here $\mathscr A_m^{(n)}=\mathscr V_m^{(n)}=\{0\}$ for $m<0$. Then
\begin{equation}\label{eq:Hn-V-dimension}
  \dim\mathscr V_m^{(n)}
  =\binom{m+2n}{2n}+\binom{m+2n-2}{2n},
  \qquad m\geq0,
\end{equation}
and
\begin{equation}\label{eq:Hn-W-dimension}
  \dim\mathscr W_m^{(n)}
  =\binom{m+2n-1}{2n-1}
   +\binom{m+2n-3}{2n-1},
  \qquad m\geq0.
\end{equation}
Equivalently,
\[
  \sum_{m=0}^{\infty}(\dim\mathscr W_m^{(n)})q^m
  =\frac{1+q^2}{(1-q)^{2n}}.
\]
In particular, for $n=1$,
\[
  \dim\mathscr V_m^{(1)}=m^2+m+1,
  \qquad
  \dim\mathscr W_m^{(1)}= \begin{cases} 1 & \text{if } m = 0, \\ 2m & \text{if } m \ge 1. \end{cases}
\]
\end{proposition}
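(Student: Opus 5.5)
The plan is to apply formula \eqref{eq:carnot-Hilbert-function} from \cref{thm:carnot-all-polynomial} with $N=\rho$. This reduces everything to two counts: $\dim\mathscr A_m^{(n)}$ and $\dim\bigl(I(S_\rho^{(n)})\cap\mathscr A_m^{(n)}\bigr)$. Set
\[
  f(z,t)=|z|^4+t^2-1 .
\]
I will show that the vanishing ideal is principal, $I(S_\rho^{(n)})=f\cdot\C[x,y,t]$, and that $f g\in\mathscr A_m^{(n)}$ holds exactly when $g\in\mathscr A_{m-4}^{(n)}$. Granting both facts, we get $\dim(I\cap\mathscr A_m^{(n)})=\dim\mathscr A_{m-4}^{(n)}$, and the rest is generating-function bookkeeping.

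\emph{Step 1: the ideal is principal.} Let $p$ vanish on $S_\rho^{(n)}$. Since $f$ is monic of degree two in $t$, division in $t$ gives
\[
  p=fg+a(x,y)\,t+b(x,y)
\]
with polynomials $g,a,b$. For $|z|<1$, put $s(z)=\sqrt{1-|z|^4}>0$. Both points $(z,\pm s(z))$ lie on the sphere, so
\[
  a(z)s(z)+b(z)=0,
  \qquad
  -a(z)s(z)+b(z)=0 .
\]
Hence $a=b=0$ on the open unit ball of $\R^{2n}$. Polynomials vanishing on an open set vanish identically, so $a=b=0$ and $p=fg$. The converse inclusion is obvious. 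Complex coefficients cause no trouble, since one can argue on real and imaginary parts, or directly.

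\emph{Step 2: the degree shift.} Write $f=f_4-1$ with $f_4=|z|^4+t^2$, which is weighted homogeneous of degree $4$ and nonzero. For $g\neq0$, let $g_k$ be its top weighted-homogeneous component. The top component of $fg$ is then $f_4g_k$, which is nonzero because the polynomial ring is a domain. So $fg$ has weighted degree $k+4$. Thus $fg\in\mathscr A_m^{(n)}$ iff $g\in\mathscr A_{m-4}^{(n)}$, and multiplication by $f$ is injective. Combined with Step 1, this gives $\dim(I\cap\mathscr A_m^{(n)})=\dim\mathscr A_{m-4}^{(n)}$.

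\emph{Step 3: counting.} The monomials in the $2n$ weight-one variables and the one weight-two variable $t$ give
\[
  \sum_j \dim P_j(\Hh^n)\,q^j=\frac{1}{(1-q)^{2n}(1-q^2)} .
\]
Taking cumulative sums multiplies this by $(1-q)^{-1}$. Subtracting the shifted copy from Step 2 multiplies by $(1-q^4)$. Therefore
\[
  \sum_m \dim\mathscr V_m^{(n)}q^m
  =\frac{1-q^4}{(1-q)^{2n+1}(1-q^2)}
  =\frac{1+q^2}{(1-q)^{2n+1}},
\]
and reading off coefficients gives \eqref{eq:Hn-V-dimension}. Since $\dim\mathscr W_m^{(n)}=\dim\mathscr V_m^{(n)}-\dim\mathscr V_{m-1}^{(n)}$ by \cref{thm:carnot-all-polynomial}, the $\mathscr W$ series is obtained by multiplying by $(1-q)$. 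This gives $(1+q^2)/(1-q)^{2n}$ and hence \eqref{eq:Hn-W-dimension}.

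For $n=1$ we get
\[
  \binom{m+2}{2}+\binom{m}{2}=\frac{(m+2)(m+1)+m(m-1)}{2}=m^2+m+1 ,
\]
and the successive differences are $1$ for $m=0$ and $2m$ for $m\geq1$.

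The only nonroutine point is Step 1, i.e. that $I(S_\rho^{(n)})$ really is generated by $f$ and is not a larger ideal. The division-in-$t$ argument settles it, because the two branches $t=\pm s(z)$ over the open ball $|z|<1$ separate the remainder terms $a$ and $b$.
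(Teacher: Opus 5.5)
Your proof is correct and follows essentially the same route as the paper: you reduce to \eqref{eq:carnot-Hilbert-function}, show that the vanishing ideal is generated by $|z|^4+t^2-1$ by dividing in $t$ and evaluating on the two branches $t=\pm\sqrt{1-|z|^4}$, and use the top weighted-homogeneous component to identify $I(S_\rho^{(n)})\cap\mathscr A_m^{(n)}$ with a multiple of $\mathscr A_{m-4}^{(n)}$. The only difference is that you do all the counting with generating functions, whereas the paper writes $\dim P_m(\Hh^n)$ as an explicit binomial sum and telescopes $\dim P_m-\dim P_{m-4}$. Your version has the small advantage that the vanishing of the second binomial term for $m<2$ is automatic.
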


\begin{proof}
Define the function
\[
  F(z,t)=|z|^4+t^2-1.
\]
The vanishing ideal of $S_\rho^{(n)}$ is $(F)$. Indeed, division by the monic
quadratic $F$ in the variable $t$ gives
\[
  p=Fq+a(z)t+b(z).
\]
Here $a$ and $b$ are polynomials in the $2n$ real horizontal coordinates.
For $|z|<1$, evaluation at
$t=\pm\sqrt{1-|z|^4}$ and subtraction show that $a$ vanishes on an open
ball; addition then gives the same conclusion for $b$. Thus $a=b=0$.

If $q_d$ is the top weighted homogeneous part of a nonzero polynomial $q$,
then the top part of $Fq$ is $(|z|^4+t^2)q_d$ and has degree $d+4$.
Hence,
\[
  \ker(\mathscr A_m^{(n)}\longrightarrow\mathscr V_m^{(n)})
  =F\mathscr A_{m-4}^{(n)},
\]
and therefore
\[
  \dim\mathscr W_m^{(n)}
  =\dim P_m(\Hh^n)-\dim P_{m-4}(\Hh^n).
\]
There are $2n$ variables of weight one and one variable of weight two, so
\[
  \dim P_m(\Hh^n)
  =\sum_{k=0}^{\lfloor m/2\rfloor}
    \binom{m-2k+2n-1}{2n-1}.
\]
Subtracting the same expression with $m$ replaced by $m-4$ cancels all but
the first two terms and gives~\eqref{eq:Hn-W-dimension}. Its generating
function is $(1+q^2)/(1-q)^{2n}$. Since the $\mathscr V_m^{(n)}$ are the
cumulative spaces, their dimension generating function is
$(1+q^2)/(1-q)^{2n+1}$, whose coefficient of $q^m$ is the right-hand side
of~\eqref{eq:Hn-V-dimension}. 
\end{proof}

\begin{remark}
\label{rem:carnot-polar-limit}
Formula~\eqref{eq:carnot-polar} is a statement about Haar measure; it does
not provide a polar-coordinate formula for the differential operator
$\mathcal L$. Homogeneity alone says that, if
$u(\delta_r\omega)=r^m f(\omega)$, then
\[
  (\mathcal Lu)(\delta_r\omega)=r^{m-2}(T_m f)(\omega)
\]
for some degree-dependent operator $T_m$. It does not imply that
\[
T_m=\mathcal L_{S_N}+\lambda_m
\]
for a symmetric angular operator
$\mathcal L_{S_N}$. In concrete non-commutative examples
radial and tangential derivatives can be coupled, so the Euclidean
eigenvalue argument for orthogonality of different degrees is unavailable.

Nor does~\eqref{eq:carnot-polar} imply the density condition
$(\mathrm{HD})_N$ in~\eqref{eq:carnot-HD}. Harmonic polynomials do not form an algebra, so the
Stone--Weierstrass argument used for all polynomials cannot be applied. Harmonic density requires additional analytic input, such as a
polynomially compatible Dirichlet or completeness theorem for the chosen
group and gauge. Moreover, a homogeneous gauge need not have polynomial
$N^2$, so a Euclidean Fischer expansion in powers of $N^2$ may not be available
in general. The universally valid conclusions are therefore the harmonic
decomposition~\eqref{eq:carnot-harmonic-closed-span} on $\mathscr H_N$ and
the complete all-polynomial decomposition
\eqref{eq:carnot-all-polynomial-sum}.
\end{remark}

\section{Spherical decomposition on the first Heisenberg group}
\label{sec:spherical}

In this section, we specialize the results obtained in~\cref{sec:carnot-filtrations} to the first Heisenberg group $\Hh$.

\subsection{Homogeneous harmonic traces and non-orthogonality}

For $m\geq0$ set
\[
  H_m(S_\rho)= \bigl\{h|_{S_\rho} \ : \ h\in H_m(\Hh) \bigr\}.
\]
We use the complex $L^2$ inner product
\[
  \langle f,g\rangle_\sigma
  =\int_{S_\rho}f\overline g\,\dd\sigma.
\]

\begin{proposition}[Failure of orthogonality]\label{prop:counterexample}
The spaces $H_m(S_\rho)$ are not pairwise orthogonal. For instance,
\[
  1\in H_0(S_\rho),
  \qquad
  h_4=t^2-\frac12(x^2+y^2)^2\in H_4(\Hh),
\]
but they are not orthogonal:
\[
  \langle h_4,1\rangle_\sigma=\frac{\pi^2}{2}\neq0.
\]
\end{proposition}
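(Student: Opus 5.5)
The plan is a direct verification in three steps: check that $1$ and $h_4$ lie in the stated spaces, then evaluate the inner product using the explicit description $\dd\sigma=\dd\phi\,\dd\theta$ from \eqref{eq:sigma-explicit}.

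\emph{Membership.} Constants are homogeneous of degree $0$ and killed by $\Delta_H$, so $1\in H_0(S_\rho)$. Since $|z|^4$ has weight $4$ and $t^2$ has weight $2\cdot 2=4$, we have $h_4\in P_4(\Hh)$. For harmonicity I will use the expanded formula \eqref{eq:Hn-expanded-Laplacian} with $n=1$,
\[
  \Delta_H=\Delta_z+4\mathcal R\,\partial_t+4|z|^2\partial_t^2,
\]
and treat the three terms separately.
\begin{itemize}
\item The Euclidean Laplacian in $\R^2$ gives $\Delta_z(x^2+y^2)^2=16|z|^2$, so $\Delta_z h_4=-8|z|^2$.
\item We have $\partial_t h_4=2t$. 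The rotation field $\mathcal R$ annihilates every function of $(|z|,t)$, so the middle term vanishes.
\item We have $4|z|^2\partial_t^2h_4=8|z|^2$.
\end{itemize}
The three contributions sum to zero, so $h_4\in H_4(\Hh)$.

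\emph{The integral.} Restrict $h_4$ to $S_\rho$ via the parametrization \eqref{eq:sphere-param} at $s=1$. There $|z|^2=\sin\phi$ and $t=\cos\phi$, so
\[
  h_4|_{S_\rho}=\cos^2\phi-\tfrac12\sin^2\phi .
\]
Since $\dd\sigma=\dd\phi\,\dd\theta$ and the two omitted characteristic points are $\sigma$-null, we get
\[
  \langle h_4,1\rangle_\sigma
  =2\pi\int_0^\pi\Bigl(\cos^2\phi-\tfrac12\sin^2\phi\Bigr)\dd\phi
  =2\pi\Bigl(\tfrac{\pi}{2}-\tfrac{\pi}{4}\Bigr)
  =\frac{\pi^2}{2}.
\]
This is nonzero, so $H_0(S_\rho)\not\perp H_4(S_\rho)$, and the spaces $H_m(S_\rho)$ are not pairwise orthogonal.

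\emph{Main obstacle.} There is no real obstacle. The only points needing care are sign and normalization conventions: using the paper's analytic-sign $\Delta_H$ with the factor $4|z|^2\partial_t^2$, and the normalization of $\sigma$. A different positive constant in $\sigma$ would change the value $\pi^2/2$ but not the fact that it is nonzero.
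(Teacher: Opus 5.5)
Your proposal is correct and follows essentially the same route as the paper: verify $\Delta_H h_4=0$ (you split by the three terms of the expanded formula rather than computing $\Delta_H(t^2)$ and $\Delta_H(|z|^4)$ separately, which is the same computation), restrict to $S_\rho$ via the parametrization at $s=1$, and integrate against $\dd\sigma=\dd\phi\,\dd\theta$ to get $\pi^2/2$. The details, including the value of the integral, all check out.
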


\begin{proof}
Since
\[
  \Delta_H(t^2)=8(x^2+y^2),
  \qquad
  \Delta_H\bigl((x^2+y^2)^2\bigr)=16(x^2+y^2),
\]
the polynomial $h_4$ is harmonic. On~\eqref{eq:sphere-param} with $s=1$ it has the value
\[
  h_4=\cos^2\phi-\frac12\sin^2\phi.
\]
Using~\eqref{eq:sigma-explicit}, the scalar product $\langle h_4,1\rangle_\sigma$ equals
\[
  \int_{S_\rho}h_4\,\dd\sigma
  =2\pi\int_0^\pi
       \left(\cos^2\phi-\frac12\sin^2\phi\right)\dd\phi
  =\frac{\pi^2}{2}.
\]
\end{proof}

\begin{remark}\label{rem:operator-pencil}
A direct calculation in the coordinates~\eqref{eq:sphere-param} gives, with
$a=\sin\phi$ and $b=\cos\phi$, the following formula for the Laplacian operator:
\[
\Delta_H = a\left(\partial_s^2+\frac3s\partial_s\right)
 -\frac{2b}{s}\partial_{s\theta} +\frac1{s^2}\left(
4a\partial_{\phi\phi}+4b\partial_\phi
+4a\partial_{\phi\theta}+a^{-1}\partial_{\theta\theta}
\right).
\]
Thus the radial and angular derivatives are coupled. Hence, if $u=s^mf(\phi,\theta)$ is homogeneous and harmonic, then $f$ satisfies an $m$-dependent operator pencil, not an eigenvalue equation for one fixed symmetric spherical operator.
\end{remark}

\subsection{Angular orthogonality and completeness}

The rotation action does yield an orthogonal decomposition and it can be proved rather easily. For
$\nu\in\mathbb Z$, define
\[
  \mathscr L_\nu
  := \bigl\{f\in L^2(S_\rho,\sigma) \ : \ f(R_\alpha\omega)
       =e^{\imath \nu\alpha}f(\omega)\text{ for all }\alpha \bigr\}.
\]

\begin{proposition}[Angular orthogonality]\label{prop:angular}
There holds
\[
  L^2(S_\rho,\sigma)=\widehat{\bigoplus_{\nu\in\mathbb Z}}\mathscr L_\nu.
\]
Moreover,
\[
  H_m(S_\rho)
  =\bigoplus_{\substack{|\nu|\leq m\\m-|\nu|\text{ even}}}
       H_{m,\nu}(S_\rho),
  \qquad
  H_{m,\nu}(S_\rho) := H_m(S_\rho)\cap\mathscr L_\nu,
\]
and every nonzero $H_{m,\nu}(S_\rho)$ is one-dimensional. For each fixed
$\nu$,
\[
  \overline{\operatorname{span}
  \bigl\{H_{|\nu|+2j,\nu}(S_\rho) \ : \ j\geq0 \bigr\}}^{\,L^2}=\mathscr L_\nu.
\]
\end{proposition}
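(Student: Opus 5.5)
The proof has three parts: the orthogonal decomposition into the $\mathscr L_\nu$, the splitting of each $H_m(S_\rho)$ by angular frequency, and completeness inside each $\mathscr L_\nu$.

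\emph{Decomposition into the $\mathscr L_\nu$.} By the remark above, $R_\alpha$ preserves $S_\rho$ and $\sigma$. In the coordinates \eqref{eq:sphere-param} it acts by $\theta\mapsto\theta+\alpha$, and $\dd\sigma=\dd\phi\dd\theta$ by \eqref{eq:sigma-explicit}. So $L^2(S_\rho,\sigma)=L^2((0,\pi)\times(\mathbb R/2\pi\mathbb Z),\dd\phi\dd\theta)$. Fourier expansion in $\theta$ gives projections
\[
  (P_\nu f)(\phi,\theta)=e^{\imath\nu\theta}\frac1{2\pi}\int_0^{2\pi}f(\phi,\theta')e^{-\imath\nu\theta'}\dd\theta'.
\]
Their ranges are exactly $\mathscr L_\nu$, they are mutually orthogonal, and by Fubini and Parseval in $\theta$ they sum strongly to the identity. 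This gives the first identity.

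\emph{Splitting of $H_m(S_\rho)$.} Write $z^a\overline z^b t^c$ for monomials, with $a+b+2c=m$. Each such monomial lies in $\mathscr L_{a-b}$. Moreover $|a-b|\le a+b\le m$ and $m-(a-b)\equiv 2b+2c$ is even. Since $\Delta_H$ commutes with rotations, it preserves each isotypic piece $P_{m,\nu}$. Hence $H_m=\bigoplus_\nu H_{m,\nu}$, with $H_{m,\nu}=\ker\Delta_H|_{P_{m,\nu}}$. Restriction is injective by \cref{prop:carnot-harmonic-direct}, so this splitting passes to traces.

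For one-dimensionality, take $\nu\ge0$ (the case $\nu<0$ follows by conjugation, since $\Delta_H$ has real coefficients) and set $m=\nu+2k$. A basis of $P_{m,\nu}$ is $z^{\nu}\,(|z|^2)^{k-c}t^{c}$, $0\le c\le k$, so $\dim P_{m,\nu}=k+1$. Similarly $P_{m-2,\nu}$ has dimension $k$. I will show that $\Delta_H:P_{m,\nu}\to P_{m-2,\nu}$ is surjective, so the kernel is one-dimensional. Using \eqref{eq:Delta-polar-rt}, write elements as $e^{\imath\nu\theta}r^\nu g(r^2,t)$. The term $4r^2\partial_t^2$ lowers the $t$-degree by two, while the other terms preserve the $t$-degree or lower it by one. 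So in the basis ordered by $c$, the map is triangular: $\Delta_H$ sends $|z|^{2(k-c)}t^c z^\nu$ to a leading term in $t^{c-1}$ with coefficient
\[
  4(k-c+1)(k-c+1+\nu)+4\nu c+\dots
\]
coming from $\Delta_z$ and $-4\partial_\theta\partial_t$, together with $t^{c-2}$ terms. I expect this triangularity check, and in particular showing the diagonal coefficient never vanishes, to be the main obstacle. Alternatively one can count: $\dim H_m=m+1$ by Greiner, and that equals the number of admissible $\nu$. So if each $H_{m,\nu}$ is nonzero, each is exactly one-dimensional. Nonzeroness can be checked by the leading-coefficient triangularity above, which only needs those coefficients to be nonzero, and they are positive sums.

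\emph{Completeness in $\mathscr L_\nu$.} By the Greiner--Koornwinder theorem, the algebraic span of $\bigcup_m H_m(S_\rho)$ is uniformly, hence $L^2$, dense in $L^2(S_\rho,\sigma)$. Applying the continuous projection $P_\nu$ maps this span onto the span of the $H_{m,\nu}(S_\rho)$, and $P_\nu$ sends $L^2$ onto $\mathscr L_\nu$. A dense set maps to a dense subset of $\mathscr L_\nu$. Since $H_{m,\nu}=0$ unless $m=|\nu|+2j$, this gives the stated closure.
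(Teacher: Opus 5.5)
Your Fourier decomposition, the splitting $H_m=\bigoplus_\nu H_{m,\nu}$, and the passage to traces are fine. The one-dimensionality step, however, is miscomputed, and the justification you give would not work as written.

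The Euclidean part $\Delta_z$ of $\Delta_H$ preserves the power of $t$; only $4\mathcal R\,\partial_t$ and $4|z|^2\partial_t^2$ lower it. Take $e_c=z^\nu|z|^{2(k-c)}t^c$ for $0\le c\le k$ and $f_c=z^\nu|z|^{2(k-1-c)}t^c$ for $0\le c\le k-1$. The monomial formula at the end of the first appendix gives
\[
  \Delta_He_c=4(\nu+k-c)(k-c)\,f_c-4\imath\nu c\,f_{c-1}+4c(c-1)\,f_{c-2}.
\]
So for $c<k$ the top $t$-power of $\Delta_He_c$ is $t^c$, not $t^{c-1}$. Your coefficient $4(k-c+1)(k-c+1+\nu)+4\nu c$ adds two different matrix entries: the $f_{c-1}$-entry of the column $e_{c-1}$, and the $f_{c-1}$-entry of the column $e_c$. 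The latter is $-4\imath\nu c$, which is purely imaginary, so "positive sums" is false. The pivot pattern $e_c\mapsto f_{c-1}$ genuinely breaks down: for $\nu=0$, $\Delta_Ht^k=4k(k-1)|z|^2t^{k-2}$ has no $t^{k-1}$ component at all.

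The correct triangular structure uses the columns $e_0,\dots,e_{k-1}$. They form an upper-triangular $k\times k$ block with diagonal entries $4(\nu+k-c)(k-c)>0$. Hence $\Delta_H\colon P_{m,\nu}\to P_{m-2,\nu}$ has rank $k$, and its kernel is exactly one-dimensional. Your fallback counting argument also works, and its nonzeroness step needs no triangularity. Rank--nullity gives $\dim H_{m,\nu}\ge\dim P_{m,\nu}-\dim P_{m-2,\nu}=1$ for each of the $m+1$ admissible $\nu$, and $\dim H_m=m+1$ then forces equality.

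With this repair your proof is correct, and for the rotation types it is more self-contained than the paper's. The paper reads off the rotation types and their dimensions from Greiner's explicit basis. It then invokes the Greiner--Koornwinder fixed-mode completeness theorem directly for each $\nu$.

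Your completeness step runs in the opposite direction: you project global density onto $\mathscr L_\nu$ with the bounded projection $P_\nu$. This is valid. Note, though, that in the paper the global density theorem is itself deduced from the Greiner--Koornwinder fixed-mode result via Fej\'er means. So your route uses the same analytic input, only more indirectly, and the global density should be sourced from Greiner--Koornwinder rather than treated as an independent ingredient.
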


\begin{proof}
The first assertion is the ordinary Fourier decomposition in the variable
$\theta$, while the description and dimension of the harmonic rotation types
follow from Greiner's explicit basis~\cite[Thm.~8.5]{Greiner1980}.  

For fixed-mode completeness, take $(k,l)=(\nu,0)$ when $\nu\geq0$ and
$(k,l)=(0,-\nu)$ when $\nu<0$ in the notation of Greiner and Koornwinder,
and apply their completeness theorem
\cite[Thm.~5.2 and Cor.~5.3]{GreinerKoornwinder1983}.
\end{proof}

\begin{remark}
The map $J(z,t)=(-z,t)$ preserves $\sigma$. Every weighted homogeneous polynomial of degree $m$ satisfies 
\[
p\circ J=(-1)^mp.
\]
Consequently, $H_m(S_\rho)\perp H_k(S_\rho)$ whenever $m$ and $k$ have opposite parity. The counterexample in~\cref{prop:counterexample} shows that no corresponding assertion holds for arbitrary distinct degrees of the same parity.
\end{remark}

We next distinguish completeness from degree orthogonality.

\begin{theorem}[Completeness of Heisenberg harmonic traces]\label{thm:harmonic-completeness}
For every $m\geq0$, restriction from $H_m(\Hh)$ to $S_\rho$ is injective and
\[
  \dim H_m(S_\rho)=m+1.
\]
The finite sum of the spaces $H_m(S_\rho)$ is algebraically direct, and
\begin{equation}\label{eq:dense-harmonics}
  C(S_\rho)
  =\overline{\bigoplus_{m\geq0}^{\mathrm{alg}}H_m(S_\rho)}^{\,\|\cdot\|_\infty}.
\end{equation}
In particular, the same algebraic sum is dense in $L^p(S_\rho,\sigma)$ for $1\leq p<\infty$.
\end{theorem}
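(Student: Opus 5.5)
The plan is to combine the general Carnot results of \cref{sec:carnot-filtrations} with two external Heisenberg-specific inputs: Greiner's dimension count and the Greiner--Koornwinder completeness theorem.

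\textbf{Injectivity and directness.} Injectivity of restriction on $H_m(\Hh)$ and algebraic directness of the family $\{H_m(S_\rho)\}$ follow immediately from \cref{prop:carnot-harmonic-direct}, applied with $\mathbb G=\Hh$, $\mathcal L=\Delta_H$ and $N=\rho$. The gauge $\rho$ is continuous, proper and homogeneous of degree one under $\delta_r$, so the hypotheses hold. Bony's maximum principle on the connected Kor\'anyi ball is the only analytic input at this stage. Combining injectivity with Greiner's count $\dim H_m(\Hh)=m+1$ from \cite{Greiner1980} gives $\dim H_m(S_\rho)=m+1$. Alternatively, this dimension is stated later via the triangular construction of the appendix; here I will simply cite Greiner.

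\textbf{Uniform density.} This is the main point, since Stone--Weierstrass is unavailable for harmonic polynomials (\cref{rem:carnot-polar-limit}). I would argue mode by mode in the rotation parameter.

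\begin{enumerate}
\item Given $f\in C(S_\rho)$, approximate $f$ uniformly by a finite Fourier polynomial in $\theta$. One way is Fej\'er means in $\theta$: they converge uniformly because $f$ is uniformly continuous and the rotations $R_\alpha$ act continuously on the compact sphere. This reduces the problem to continuous functions of the form $g(\phi)e^{\imath\nu\theta}$ with $g(\phi)\,e^{\imath \nu\theta}$ continuous on $S_\rho$, including at the two characteristic points.
\item For each such mode, invoke the Greiner--Koornwinder theorem \cite[Thm.~5.2 and Cor.~5.3]{GreinerKoornwinder1983}. It solves the Dirichlet problem on the Kor\'anyi ball through series of the harmonic polynomials $H_{|\nu|+2j,\nu}$. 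I would use it in its uniform form, namely density in $C(S_\rho)$ of the traces of harmonic polynomials of a fixed rotation type.
\end{enumerate}

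I expect this step to be the obstacle. One must make sure the cited completeness is uniform rather than only $L^2$ (\cref{prop:angular} records the $L^2$ version). If only $L^2$ completeness is available, I would instead obtain uniform density from the Dirichlet problem. Solve $\Delta_Hu=0$ in $B_\rho$ with $u=f$ on $S_\rho$; this uses regularity of the Kor\'anyi ball for $\Delta_H$, since the ball is a level set of the gauge of the fundamental solution. Then approximate $u$ by its homogeneous harmonic expansion on a slightly smaller dilated ball $\delta_r B_\rho$, where the Greiner--Koornwinder series converges uniformly. Finally, use uniform continuity of $u$ on the closed ball to compare $f$ with $u\circ\delta_r$ on $S_\rho$. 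Note that $u\circ\delta_r$ is again harmonic, and the truncated homogeneous pieces remain in $H_m$ after dilation.

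\textbf{$L^p$ density.} The final assertion follows because $\sigma$ is a finite measure with full support (\cref{prop:carnot-polar-support}). Continuous functions are dense in $L^p(S_\rho,\sigma)$ for $1\le p<\infty$, and uniform convergence implies $L^p$ convergence.
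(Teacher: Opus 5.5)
Your proposal is correct and follows the paper's proof essentially step by step. The steps are: injectivity and algebraic directness from \cref{prop:carnot-harmonic-direct}; the count $\dim H_m(\Hh)=m+1$ from Greiner; Fej\'er means for the rotation action to reduce $f$ to finitely many continuous angular modes; the Greiner--Koornwinder fixed-mode theorem \cite[Thm.~5.2 and Cor.~5.3]{GreinerKoornwinder1983} to approximate each mode uniformly by harmonic traces of the same type, followed by a diagonal choice; and the passage from uniform to $L^p$ density on the finite measure space. The paper uses that cited theorem directly in its uniform form, so your Dirichlet-problem fallback is not needed; its key step, uniform convergence of the homogeneous harmonic expansion of $u$ on $\delta_r\overline{B_\rho}$, does not follow automatically from hypoellipticity and would need its own justification.
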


\begin{proof}
If a homogeneous polynomial vanishes on $S_\rho$, homogeneity shows that it vanishes on $\Hh\setminus\{0\}$ and hence identically. The dimension formula is due to Greiner~\cite{Greiner1980} (follows independently from~\cref{cor:dimensions}). Algebraic directness is the specialization of~\cref{prop:carnot-harmonic-direct} to $\mathbb G=\Hh$ and $N=\rho$.

\smallskip
For completeness, let $f\in C(S_\rho)$. Its Fej\'er means for the rotation action converge uniformly to $f$ and contain only finitely many angular modes. Greiner and Koornwinder's fixed-mode completeness theorem~\cite[Thm.~5.2 and Cor.~5.3]{GreinerKoornwinder1983}, based on solvability of the Heisenberg Dirichlet problem, uniformly approximates each of those modes by the corresponding harmonic traces; see also~\cite{Gaveau1977,Jerison1981,Jerison1981B}. A diagonal choice proves~\eqref{eq:dense-harmonics}. Finally, uniform density implies $L^p$ density because $S_\rho$ is compact and $\sigma$ is finite.
\end{proof}

\subsection{The Hilbert decomposition}

We can now state the orthogonal decomposition in the first Heisenberg group. For this, set
\[
  \mathcal E_{-1}=\{0\},
  \qquad
  \mathcal E_m=\bigoplus_{j=0}^m{}^{\mathrm{alg}}H_j(S_\rho),
  \qquad
  \mathcal K_m=\mathcal E_m\cap\mathcal E_{m-1}^{\perp}.
\]

\begin{theorem}\label{thm:corrected-L2}
For every $m\geq0$, it holds
\[
  \mathcal E_m=\mathcal E_{m-1}\mathbin{\widehat\oplus}\mathcal K_m,
  \qquad
  \dim\mathcal K_m=m+1,
\]
and
\begin{equation}\label{eq:corrected-L2}
  L^2(S_\rho,\sigma)
  =\widehat{\bigoplus_{m=0}^\infty}\mathcal K_m.
\end{equation}
If $\Pi_{m-1}$ denotes orthogonal projection onto $\mathcal E_{m-1}$, then the restriction
\[
  (I-\Pi_{m-1})|_{H_m(S_\rho)}:
  H_m(S_\rho)\longrightarrow\mathcal K_m
\]
is an isomorphism.
\end{theorem}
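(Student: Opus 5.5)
The plan is to specialize \cref{thm:carnot-harmonic-filtration} to $\mathbb G=\Hh$, $\mathcal L=\Delta_H$, and $N=\rho$. Then $\sigma_N=\sigma$, as both are defined by the same polar formula~\eqref{eq:polar-formula} with $Q=4$. The only extra input is the harmonic-density condition $(\mathrm{HD})_\rho$, which we take from \cref{thm:harmonic-completeness}.

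First, since $\mathcal E_{m-1}\subseteq\mathcal E_m$ are finite-dimensional subspaces of $L^2(S_\rho,\sigma)$, the space $\mathcal K_m$ is by definition the orthogonal complement of $\mathcal E_{m-1}$ inside $\mathcal E_m$. This gives $\mathcal E_m=\mathcal E_{m-1}\mathbin{\widehat\oplus}\mathcal K_m$.

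Next, algebraic directness (\cref{prop:carnot-harmonic-direct}, or directly \cref{thm:harmonic-completeness}) gives
\[
\dim\mathcal E_m-\dim\mathcal E_{m-1}=\dim H_m(S_\rho).
\]
Injectivity of restriction together with Greiner's formula shows $\dim H_m(S_\rho)=\dim H_m(\Hh)=m+1$, hence $\dim\mathcal K_m=m+1$.

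For the isomorphism claim, $(I-\Pi_{m-1})$ maps $H_m(S_\rho)\subseteq\mathcal E_m$ into $\mathcal E_m\cap\mathcal E_{m-1}^\perp=\mathcal K_m$. The map is injective: if $(I-\Pi_{m-1})h=0$, then $h\in\mathcal E_{m-1}\cap H_m(S_\rho)=\{0\}$ by directness. Since the source and target both have dimension $m+1$, it is an isomorphism.

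Finally, iterating the finite splittings shows that the spaces $\mathcal K_0,\ldots,\mathcal K_m$ are mutually orthogonal and their sum is $\mathcal E_m$. Hence the closure of $\bigcup_m\mathcal E_m$ equals $\widehat{\bigoplus}_m\mathcal K_m$; this is \eqref{eq:carnot-harmonic-closed-span}. By \eqref{eq:dense-harmonics}, the algebraic sum of the $H_m(S_\rho)$ is uniformly dense in $C(S_\rho)$. Because $\sigma$ is finite and $C(S_\rho)$ is dense in $L^2(S_\rho,\sigma)$ ($\sigma$ being a finite Radon measure on a compact space), the same sum is $L^2$-dense. This is exactly $(\mathrm{HD})_\rho$, and so \eqref{eq:corrected-L2} follows.

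The only substantive step is this density input, which rests on the Greiner--Koornwinder completeness theorem already invoked in \cref{thm:harmonic-completeness}. Everything else is finite-dimensional linear algebra.
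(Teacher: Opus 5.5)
Your proposal is correct and follows essentially the same route as the paper: you specialize \cref{thm:carnot-harmonic-filtration} to $\mathbb G=\Hh$, $N=\rho$, take $(\mathrm{HD})_\rho$ from the density statement in \cref{thm:harmonic-completeness}, and take $\dim H_m(\Hh)=m+1$ from Greiner (or \cref{cor:dimensions}). The only difference is that you re-derive the finite-dimensional steps of the Carnot theorem inline rather than citing it.
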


\begin{proof}
Apply~\cref{thm:carnot-harmonic-filtration} with
$\mathbb G=\Hh$ and $N=\rho$. The density assertion in
\cref{thm:harmonic-completeness} verifies $(\mathrm{HD})_\rho$, while
$\dim H_m(\Hh)=m+1$.
\end{proof}

\begin{remark}
Every element of $\mathcal K_m$ is the trace of a harmonic polynomial of weighted degree at most $m$. In general it is not homogeneous: orthogonal projection subtracts lower-degree harmonic components. Hence,~\cref{thm:corrected-L2} is a genuine Hilbert direct sum, but its summands must not be identified with the original spaces $H_m(S_\rho)$.
\end{remark}

\subsection{Comparison with the all-polynomial filtration}
\label{sec:all-polynomials}

For comparison with the harmonic filtration, we also state the
$n=1$ specialization of the all-polynomial construction from~\cref{sec:carnot-filtrations}. Let
\[
  \mathscr A_m=\bigoplus_{j=0}^m P_j(\Hh),
  \qquad \mathscr A_m=\{0\}\quad(m<0),
\]
and let
\[
  \mathscr V_m= \bigl\{p|_{S_\rho} \ : \ p\in\mathscr A_m\bigr\},
  \qquad
  \mathscr W_m=\mathscr V_m\cap\mathscr V_{m-1}^{\perp}.
\]

\begin{theorem}\label{thm:all-poly}
For every $m\geq0$,
\[
  \dim\mathscr V_m=m^2+m+1,
\]
while
\[
  \dim\mathscr W_0=1,
  \qquad
  \dim\mathscr W_m=2m\quad(m\geq1).
\]
Moreover,
\begin{equation}\label{eq:all-poly-L2}
  L^2(S_\rho,\sigma)
  =\widehat{\bigoplus_{m=0}^\infty}\mathscr W_m.
\end{equation}
\end{theorem}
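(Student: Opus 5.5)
The theorem is the case $n=1$ of the general results already proved, so the plan is to specialize rather than recompute anything. For the dimension count I will invoke \cref{prop:Hn-W-dimensions} with $n=1$. The vanishing ideal of $S_\rho$ is generated by $F=|z|^4+t^2-1$, so the kernel of $\mathscr A_m\to\mathscr V_m$ is $F\mathscr A_{m-4}$. This gives
\[
\dim\mathscr W_m=\dim P_m(\Hh)-\dim P_{m-4}(\Hh).
\]
With two weight-one variables and one weight-two variable,
\[
\dim P_m(\Hh)=\sum_{k=0}^{\lfloor m/2\rfloor}(m-2k+1).
\]
In the difference only the terms $k=0,1$ survive, so the result is $(m+1)+(m-1)=2m$ for $m\geq1$; for $m=0$ it is $1$. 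Summing these increments gives
\[
\dim\mathscr V_m=1+\sum_{j=1}^m 2j=m^2+m+1,
\]
which agrees with $\binom{m+2}{2}+\binom{m}{2}$.

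The orthogonal splitting $\mathscr V_m=\mathscr V_{m-1}\mathbin{\widehat\oplus}\mathscr W_m$ is finite-dimensional linear algebra. Completeness \eqref{eq:all-poly-L2} is \cref{thm:carnot-all-polynomial} with $\mathbb G=\Hh$ and $N=\rho$. That theorem applies because $\rho$ is a continuous proper homogeneous gauge, polynomial traces separate points and are closed under conjugation (Stone--Weierstrass), and $\sigma$ is finite with full support by \cref{prop:carnot-polar-support}.

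There is no real obstacle. The only point needing care is the low-degree bookkeeping: the convention $P_j=\{0\}$ for $j<0$ must be used correctly when $m\leq3$, where the kernel $F\mathscr A_{m-4}$ is trivial. One should also note explicitly that $\mathscr W_m$ has the claimed dimension because the quotient $\mathscr V_m/\mathscr V_{m-1}$ is isomorphic to $P_m/FP_{m-4}$. This uses the top-degree argument from \cref{prop:Hn-W-dimensions}: the top part of $Fq$ is $(|z|^4+t^2)q_d$, so $F$ raises the degree by exactly $4$.
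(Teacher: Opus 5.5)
Your proposal is correct and follows the paper's proof. The paper also obtains completeness by specializing \cref{thm:carnot-all-polynomial} to $\mathbb G=\Hh$, $N=\rho$, and reads off the dimensions as the $n=1$ case of \cref{prop:Hn-W-dimensions}; you just unwind that count explicitly. One small slip: because $F$ is inhomogeneous, the quotient $\mathscr V_m/\mathscr V_{m-1}$ is $P_m(\Hh)/(|z|^4+t^2)P_{m-4}(\Hh)$ rather than $P_m/FP_{m-4}$, but the dimension count is the same.
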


\begin{proof}
Apply~\cref{thm:carnot-all-polynomial} with
$\mathbb G=\Hh$ and $N=\rho$. The dimension formulas are exactly the
$n=1$ specialization of~\cref{prop:Hn-W-dimensions}.
\end{proof}

\begin{remark}
Order the weighted monomials by degree and apply Gram--Schmidt, discarding zero residuals. The residuals first appearing at degree $m$ form an orthonormal basis of $\mathscr W_m$. These summands depend on $\sigma$ and are not harmonic. Their dimensions also display the distinction: for $m\geq2$,
\[
  \dim\mathscr W_m=2m,
  \qquad
  \dim H_m(S_\rho)=m+1.
\]
\end{remark}

\section{Higher-dimensional Heisenberg groups}
\label{sec:higher-heisenberg}

The preceding spherical discussion has a natural higher-dimensional form,
provided that homogeneous degree is not confused with orthogonality.

\subsection{The higher-dimensional Kor\'anyi sphere}

We use the group, vector-field, and homogeneity conventions of~\cref{sec:prelim} and~\cref{def:homogeneous}. It remains
to describe the explicit density of the higher-dimensional polar measure. Consider the unit sphere
\[
  S_\rho^{(n)}= \bigl\{(z,t)\in\Hh^n \ : \ \rho(z,t)=1 \bigr\},
\]
and let $\sigma_n$ be the measure defined by
\eqref{eq:Hn-polar-formula}.
If $\xi\in\mathbb S^{2n-1}\subset\mathbb C^n$, then
\begin{equation}\label{eq:Hn-polar-parametrization}
  (s,\phi,\xi)\longmapsto
  \bigl(s\sqrt{\sin\phi}\,\xi,s^2\cos\phi\bigr),
  \qquad 0<\phi<\pi,\quad s>0,
\end{equation}
has Euclidean Jacobian
$s^{2n+1}(\sin\phi)^{n-1}$. With our normalization, it yields
\begin{equation}\label{eq:Hn-polar-measure}
  \dd\sigma_n
  =(\sin\phi)^{n-1}\dd\phi\,
     \dd\omega_{2n-1}(\xi),
\end{equation}
where $\omega_{2n-1}$ is the usual surface measure on
$\mathbb S^{2n-1}$. In particular, $\sigma_n$ is finite, has full support,
and is invariant under the natural action of $U(n)$.

As in the introduction, we write
\[
  H_m(S_\rho^{(n)})
  = \bigl\{h|_{S_\rho^{(n)}} \ : \ h\in H_m(\Hh^n) \bigr\}.
\]

\subsection{A measure-independent obstruction to degree orthogonality}

The failure of degree orthogonality is not peculiar to $\Hh$, nor
can it be repaired by replacing the polar measure with another
surface measure.

\begin{proposition}[Universal obstruction]
\label{prop:Hn-universal-obstruction}
Let $\mu$ be any finite positive Borel measure on $S_\rho^{(n)}$ with full
support.
Then 
\[
H_1(S_\rho^{(n)}) \not\perp H_3(S_\rho^{(n)}) \qquad \text{in } L^2(S_\rho^{(n)},\mu).
\]
\end{proposition}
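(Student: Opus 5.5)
The plan is to exhibit one explicit pair $h_1\in H_1(\Hh^n)$, $h_3\in H_3(\Hh^n)$ for which the integrand $h_3\overline{h_1}$ has real part that is nonnegative on $S_\rho^{(n)}$ and not identically zero. Full support of $\mu$ then forces $\langle h_3,h_1\rangle_\mu\neq0$, whatever $\mu$ is. Since $H_m(\Hh^n)$ is defined with complex coefficients, I will work with the complex coordinate $z_1=x_1+\imath y_1$. The function $h_1=z_1$ is linear, and $\Delta_H z_1=0$ by \eqref{eq:Hn-expanded-Laplacian}, so $z_1\in H_1(\Hh^n)$.

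For $h_3$ I take the ansatz $h_3=z_1|z|^2+c\,z_1t$ and compute $\Delta_H$ using \eqref{eq:Hn-expanded-Laplacian}. For the first term, $\partial_t$ vanishes, so only $\Delta_z$ contributes. Using $\Delta_z(fg)=f\Delta_zg+2\nabla f\cdot\nabla g+g\Delta_zf$ with $\Delta_z|z|^2=4n$ and $2\nabla z_1\cdot\nabla|z|^2=4z_1$, I get $\Delta_H(z_1|z|^2)=(4n+4)z_1$. For the second term, $\Delta_z(z_1t)=0$ and $\partial_t^2(z_1t)=0$, so only the mixed term $4\mathcal R\partial_t$ contributes. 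Since $\mathcal R z_1=y_1-\imath x_1=-\imath z_1$, this gives $\Delta_H(z_1t)=-4\imath z_1$. Choosing $c=-\imath(n+1)$ therefore yields
\[
  h_3=z_1\bigl(|z|^2-\imath(n+1)t\bigr)\in H_3(\Hh^n).
\]
Both $h_1$ and $h_3$ are visibly weighted homogeneous of degrees $1$ and $3$.

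Then I compute
\[
  \langle h_3,h_1\rangle_\mu=\int_{S_\rho^{(n)}}|z_1|^2\bigl(|z|^2-\imath(n+1)t\bigr)\,\dd\mu,
\]
whose real part is $\int|z_1|^2|z|^2\,\dd\mu$. The integrand is continuous and nonnegative, and it is strictly positive on the nonempty relatively open set $\{z_1\neq0\}\cap S_\rho^{(n)}$; for instance, the point with $z=(1,0,\dots,0)$ and $t=0$ lies in this set. Full support of $\mu$ gives positive measure to this set, so the real part is strictly positive. Hence $h_3|_{S_\rho^{(n)}}\not\perp h_1|_{S_\rho^{(n)}}$ in $L^2(S_\rho^{(n)},\mu)$.

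The only real obstacle is finding the harmonic cubic. A purely real candidate such as $x_1|z|^2+(n+1)y_1t$ produces a product with $x_1$ that is sign-indefinite. The complex rotation type $z_1\cdot(\text{radial})$, which $\mathcal R$ acts on by the scalar $-\imath$, avoids this: the $|z_1|^2|z|^2$ part carries a definite sign, and the $t$-dependent part becomes purely imaginary, so it drops out of the real part.
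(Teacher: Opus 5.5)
Your proof is correct and follows essentially the same route as the paper. Both use the pair $h_1=z_1$, $h_3=z_1\bigl(|z|^2-\imath(n+1)t\bigr)$, check harmonicity via the expanded formula for $\Delta_H$, and conclude from $\operatorname{Re}(h_3\overline{h_1})=|z_1|^2|z|^2\geq0$, positive on a nonempty relatively open subset of the sphere, that full support of $\mu$ forces $\langle h_3,h_1\rangle_\mu\neq0$.
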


\begin{proof}
Consider the polynomials
\[
  h_1(z,t)=z_1,
  \qquad
  h_3(z,t)=z_1\bigl(|z|^2-\imath(n+1)t\bigr).
\]
Both have the indicated Heisenberg degrees. Moreover,
\[
  \Delta_z(z_1|z|^2)=4(n+1)z_1,
  \qquad
  \mathcal R z_1=-\imath z_1,
  \qquad
  \Delta_H(z_1t)=-4\imath z_1.
\]
Formula~\eqref{eq:Hn-expanded-Laplacian} therefore gives
$\Delta_Hh_1=\Delta_Hh_3=0$. On the other hand,
\[
  \operatorname{Re}\bigl(h_3\overline{h_1}\bigr)
  =|z_1|^2|z|^2.
\]
This is nonnegative everywhere and strictly positive on a nonempty open
subset of the unit sphere $S_\rho^{(n)}$. Since $\mu$ has full support,
\[
  \operatorname{Re}\langle h_3,h_1\rangle_{L^2(\mu)}
  =\int_{S_\rho^{(n)}}|z_1|^2|z|^2\dd\mu>0.
\]
Thus the two trace spaces cannot be orthogonal.
\end{proof}

\subsection{Polynomial surjectivity and dimensions}

Although multiplication by $\rho^2$ does not preserve polynomials, the
dimension of $H_m(\Hh^n)$ still has a simple algebraic formula.
Every weighted homogeneous polynomial has a unique splitting
\begin{equation}\label{eq:Hn-weighted-polynomial-splitting}
  P_m(\Hh^n)
  =\bigoplus_{\gamma=0}^{\lfloor m/2\rfloor}
       t^\gamma\mathscr P_{m-2\gamma}(\mathbb R^{2n}),
\end{equation}
where $\mathscr P_d(\mathbb R^{2n})$ denotes the ordinary homogeneous
horizontal polynomials of degree $d$. The triangular argument
in~\ref{sec:Hn-harmonic-basis} proves that
\[
  \Delta_H:P_m(\Hh^n)\longrightarrow P_{m-2}(\Hh^n)
\]
is surjective for every $m\geq2$ and explicitly parametrizes its kernel.

\begin{theorem}
\label{thm:Hn-harmonic-dimension}
For every $m\geq0$,
\begin{equation}\label{eq:Hn-harmonic-dimension}
  \dim H_m(\Hh^n)=\binom{m+2n-1}{2n-1}.
\end{equation}
In particular, $\dim H_m(\Hh)=m+1$.
\end{theorem}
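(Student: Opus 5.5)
The plan is to show that $\Delta_H:P_m(\Hh^n)\to P_{m-2}(\Hh^n)$ is surjective for every $m\geq2$, and then take dimensions. Surjectivity gives
\[
  \dim H_m(\Hh^n)=\dim P_m(\Hh^n)-\dim P_{m-2}(\Hh^n).
\]
By the splitting \eqref{eq:Hn-weighted-polynomial-splitting},
\[
  \dim P_m(\Hh^n)=\sum_{\gamma=0}^{\lfloor m/2\rfloor}\binom{m-2\gamma+2n-1}{2n-1}.
\]
The difference telescopes to the single term $\gamma=0$, namely $\binom{m+2n-1}{2n-1}$. For $m=0,1$ we have $P_{m-2}=\{0\}$, and $\Delta_H$ kills $P_0$ and $P_1$, which are constants and linear horizontal polynomials. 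The formula then holds directly, since $\dim P_1=2n$.

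For surjectivity I use the expanded form \eqref{eq:Hn-expanded-Laplacian}. On a component $t^\gamma a(z)$ with $a\in\mathscr P_{d}(\mathbb R^{2n})$ it gives
\[
  \Delta_H(t^\gamma a)=t^\gamma\Delta_z a+4\gamma t^{\gamma-1}\mathcal R a+4\gamma(\gamma-1)t^{\gamma-2}|z|^2a.
\]
So $\Delta_H$ lowers the $t$-power by at most two and acts as $\Delta_z$ on the top $t$-power. Order $P_m$ and $P_{m-2}$ by the power of $t$, from highest to lowest. Given a target $f=\sum_\gamma t^\gamma f_{\gamma}$ in $P_{m-2}$, I solve for $p=\sum_\gamma t^\gamma a_\gamma$ by descending induction on $\gamma$. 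Suppose the coefficients $a_{\gamma'}$ with $\gamma'>\gamma$ are already fixed. The coefficient of $t^\gamma$ in $\Delta_H p$ is then $\Delta_z a_\gamma$ plus terms involving only $a_{\gamma+1}$ and $a_{\gamma+2}$, so the equation at level $\gamma$ reads $\Delta_z a_\gamma=f_\gamma-(\text{known})$. This is solvable because the Euclidean Laplacian $\Delta_z:\mathscr P_{d}(\mathbb R^{2n})\to\mathscr P_{d-2}(\mathbb R^{2n})$ is surjective, which follows from the classical Fischer decomposition. At each step the solution is determined up to an element of $\ker\Delta_z$ on $\mathscr P_{m-2\gamma}$.

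One bookkeeping check remains. The top power $\gamma=\lfloor (m-2)/2\rfloor$ of $P_{m-2}$ must be matched by $\gamma\leq\lfloor m/2\rfloor$ in $P_m$, and the index ranges need care at the top. At $\gamma=\lfloor m/2\rfloor$ the horizontal degree is $0$ or $1$. There $\Delta_z$ of $a_\gamma$ vanishes, but no target component exists at that power, so the equation at the top is vacuous. This is the only place I expect any subtlety, and it is resolved by noting that $P_{m-2}$ has maximal $t$-power $\lfloor m/2\rfloor-1$. The kernel count could also be read off directly: the free data are harmonic parts of each $a_\gamma$ together with the top coefficient. Still, the telescoping dimension count is cleaner and avoids tracking the kernel explicitly. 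The case $n=1$ gives $\binom{m+1}{1}=m+1$.
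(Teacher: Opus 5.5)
Your proof is correct and follows essentially the paper's route. Your descending induction in powers of $t$ is exactly the paper's surjectivity argument (Proposition~\ref{prop:Hn-surjectivity}), with surjectivity of $\Delta_z$ as the only input; the paper gets that from an explicit right inverse $\mathcal G_d$, and the only real difference is the final count: you use rank--nullity and telescope $\dim P_m-\dim P_{m-2}$, while the paper parametrizes the kernel as $\bigoplus_k\mathscr Y_{m-2k}(\R^{2n})\simeq H_m(\Hh^n)$ and telescopes $\sum_k\dim\mathscr Y_{m-2k}$, which is the same computation.
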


\begin{proof}
\cref{cor:Hn-harmonic-dimension} gives
$\binom{m+2n-1}{m}$, which equals the displayed expression by binomial
symmetry.
\end{proof}

\subsection{Completeness and the Hilbert decomposition}

Restriction from $H_m(\Hh^n)$ to $S_\rho^{(n)}$ is injective: if a
homogeneous polynomial vanishes on the sphere, homogeneity makes it
vanish on $\Hh^n\setminus\{0\}$. However, we can prove an even more general property.

\begin{theorem}
\label{thm:Hn-trace-completeness}
The finite sum of the spaces $H_m(S_\rho^{(n)})$ is algebraically direct,
and
\begin{equation}\label{eq:Hn-uniform-completeness}
  C(S_\rho^{(n)})
  =\overline{\bigoplus_{m\geq0}^{\mathrm{alg}}
       H_m(S_\rho^{(n)})}^{\,\|\cdot\|_\infty}.
\end{equation}
Consequently, the same algebraic sum is dense in
$L^p(S_\rho^{(n)},\sigma_n)$ for $1\leq p<\infty$.
\end{theorem}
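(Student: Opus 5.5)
Algebraic directness follows at once from \cref{prop:carnot-harmonic-direct} with $\mathbb G=\Hh^n$ and $N=\rho$. The substantive claim is the uniform density \eqref{eq:Hn-uniform-completeness}. The $L^p$ density then follows, since $\sigma_n$ is finite and uniform convergence implies $L^p$ convergence.

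For density I will reduce to the harmonic Dirichlet problem on the Kor\'anyi ball $B_\rho=\{\rho<1\}$, mirroring the proof of \cref{thm:harmonic-completeness} but now with the unitary group. Since $U(n)$ preserves $\Delta_H$, $\rho$, $\sigma_n$ and each $P_m(\Hh^n)$, and since harmonic traces are stable under composition with $U(n)$, I first approximate $f\in C(S_\rho^{(n)})$ uniformly by its Fej\'er-type means for the central circle $R_\alpha(z,t)=(e^{\imath\alpha}z,t)$. I then pass further to $U(n)$-finite vectors, i.e.\ averages against characters and matrix coefficients of irreducible $U(n)$-representations. These are dense in $C(S_\rho^{(n)})$ by Peter--Weyl for the compact group acting continuously. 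It therefore suffices to approximate functions lying in a single $U(n)$-isotypic component. By the parametrization \eqref{eq:Hn-polar-parametrization}, such a component consists of finite sums $g(\phi)Y(\xi)$, with $Y$ a bigraded spherical harmonic of type $(k,l)$ on $\mathbb S^{2n-1}$ and $g$ continuous on $[0,\pi]$, subject to the compatibility conditions at the characteristic points $\phi=0,\pi$.

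In each type $(k,l)$ I would invoke the Greiner--Koornwinder fixed-type completeness theorem \cite[Thm.~5.2 and Cor.~5.3]{GreinerKoornwinder1983}. That paper treats $\Hh^n$ with exactly this bigraded decomposition, through the Jacobi-type functions in $\phi$. It says that the traces of the homogeneous harmonic polynomials of the form $\rho^{\,\cdot}\,g_j(\phi)Y(\xi)$, obtained after matching conventions, span a uniformly dense subspace of the continuous functions of type $(k,l)$. The polynomials produced there are weighted homogeneous, so their traces lie in the spaces $H_m(S_\rho^{(n)})$; this identification can also be checked directly from the triangular basis of \ref{sec:Hn-harmonic-basis}. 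Finally, a diagonal argument combines the finitely many types present in each Fej\'er--Peter--Weyl approximant and yields \eqref{eq:Hn-uniform-completeness}.

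The main obstacle is the analytic input in the third step: uniform, not merely $L^2$, fixed-type completeness up to the characteristic points, together with the translation between the Greiner--Koornwinder normalization and $\Delta_H$ and $\rho$ here. If that citation proves insufficient, the fallback is the Dirichlet route. The Kor\'anyi ball is regular for $\Delta_H$ \cite{Gaveau1977,Jerison1981}, so $f$ has a continuous harmonic extension $u$ to $\overline{B_\rho}$. I would approximate $u$ on a slightly smaller ball $\delta_r B_\rho$ by truncating its homogeneous expansion, whose terms are harmonic polynomials after $U(n)$-finite reduction. Rescaling by $\delta_r$, and using that $u\circ\delta_r|_{S_\rho}\to f$ uniformly as $r\to1$, would then complete the argument.
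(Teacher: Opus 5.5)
Your proposal follows essentially the same route as the paper. Algebraic directness comes from \cref{prop:carnot-harmonic-direct}; an approximate identity on $U(n)$ reduces to finitely many $U(n)$ types; each type is handled by the Greiner--Koornwinder fixed-type completeness theorem; a diagonal choice finishes the density; and $L^p$ density follows because $\sigma_n$ is finite. Your preliminary Fej\'er step for the central circle is redundant, since that circle lies in $U(n)$. Your Dirichlet fallback is not self-contained: it needs the weighted homogeneous expansion of a $\Delta_H$-harmonic function on $B_\rho$ to converge uniformly on each $\delta_r\overline{B_\rho}$, which hypoellipticity alone does not give, but your main route does not depend on it.
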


\begin{proof}
Once again, the algebraic directness is the specialization of
\cref{prop:carnot-harmonic-direct} to $\mathbb G=\Hh^n$ and $N=\rho$.

\smallskip
For uniform density, convolve a given $f\in C(S_\rho^{(n)})$ with an
approximate identity on the compact group $U(n)$ whose spectrum
is finite. The resulting functions converge uniformly to $f$ and contain
only finitely many $U(n)$ types. Greiner and Koornwinder's fixed-type
completeness theorem~\cite[Thm.~5.2 and Cor.~5.3]{GreinerKoornwinder1983}
uniformly approximates each of those components by harmonic traces; a
diagonal choice proves~\eqref{eq:Hn-uniform-completeness}. The required
Dirichlet solvability is available from~\cite{Gaveau1977,Jerison1981,Jerison1981B}.
Finally, uniform density implies
$L^p$ density because the sphere is compact and $\sigma_n$ is finite.
\end{proof}

We conclude the section, as in the case of the first Heisenberg group, by providing the orthogonal decomposition. For this, set
\[
  \mathcal E_{-1}^{(n)}=\{0\},
  \qquad
  \mathcal E_m^{(n)}
  =\bigoplus_{j=0}^m{}^{\mathrm{alg}}H_j(S_\rho^{(n)}),
  \qquad
  \mathcal K_m^{(n)}
  =\mathcal E_m^{(n)}\cap
       \bigl(\mathcal E_{m-1}^{(n)}\bigr)^\perp,
\]
where orthogonality is taken in $L^2(S_\rho^{(n)},\sigma_n)$.

\begin{theorem}
\label{thm:Hn-corrected-Hilbert-sum}
For every $m\geq0$,
\[
  \mathcal E_m^{(n)}
  =\mathcal E_{m-1}^{(n)}\mathbin{\widehat\oplus}
     \mathcal K_m^{(n)},
  \qquad
  \dim\mathcal K_m^{(n)}=\binom{m+2n-1}{2n-1},
\]
and
\begin{equation}\label{eq:Hn-corrected-Hilbert-sum}
  L^2(S_\rho^{(n)},\sigma_n)
  =\widehat{\bigoplus_{m=0}^\infty}\mathcal K_m^{(n)}.
\end{equation}
If $\Pi_{m-1}^{(n)}$ is orthogonal projection onto
$\mathcal E_{m-1}^{(n)}$, then the restriction
\[
  (I-\Pi_{m-1}^{(n)})|_{H_m(S_\rho^{(n)})}:
  H_m(S_\rho^{(n)})\longrightarrow\mathcal K_m^{(n)}
\]
is an isomorphism.
\end{theorem}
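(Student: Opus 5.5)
The plan is to obtain this statement as a direct specialization of \cref{thm:carnot-harmonic-filtration}, with $\mathbb G=\Hh^n$, $\mathcal L=\Delta_H$ and $N=\rho$, exactly as \cref{thm:corrected-L2} was obtained for $n=1$. First we check the standing hypotheses. $\Hh^n$ is a step-two Carnot group with $V_1=\operatorname{span}\{X_j,Y_j\}$ and $V_2=\operatorname{span}\{T\}$, and $\Delta_H=\sum_j(X_j^2+Y_j^2)$ is the associated sub-Laplacian. The Kor\'anyi gauge $\rho$ is continuous, positive off the origin, and satisfies $\rho\circ\delta_r=r\rho$. Its polar measure $\sigma_N$ coincides with $\sigma_n$: both are defined by the same polar identity, \eqref{eq:Hn-polar-formula} versus \eqref{eq:carnot-polar} with $Q-1=2n+1$, and a measure is determined by this identity on $C_c(\mathbb G\setminus\{e\})$. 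Hence the spaces $\mathcal E_m^{(n)}$ and $\mathcal K_m^{(n)}$ agree with $\mathcal E_m$ and $\mathcal K_m$ of \cref{sec:carnot-filtrations}.

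With this identification, \cref{thm:carnot-harmonic-filtration} yields three conclusions at once. It gives the orthogonal splitting $\mathcal E_m^{(n)}=\mathcal E_{m-1}^{(n)}\mathbin{\widehat\oplus}\mathcal K_m^{(n)}$. It shows that $(I-\Pi_{m-1}^{(n)})|_{H_m(S_\rho^{(n)})}$ is an isomorphism onto $\mathcal K_m^{(n)}$; this rests on the algebraic directness of \cref{prop:carnot-harmonic-direct}, which in turn rests on Bony's maximum principle. Finally, it gives $\dim\mathcal K_m^{(n)}=\dim H_m(\Hh^n)$. Substituting \cref{thm:Hn-harmonic-dimension} then produces $\binom{m+2n-1}{2n-1}$.

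For the full Hilbert sum \eqref{eq:Hn-corrected-Hilbert-sum}, \cref{thm:carnot-harmonic-filtration} reduces the claim to the density condition $(\mathrm{HD})_\rho$. That condition follows from \cref{thm:Hn-trace-completeness}, which gives uniform density of the algebraic harmonic sum in $C(S_\rho^{(n)})$. Since $\sigma_n$ is finite, uniform density implies $L^2$ density, and $C(S_\rho^{(n)})$ is itself dense in $L^2(S_\rho^{(n)},\sigma_n)$ because $\sigma_n$ is a finite Radon measure on a compact space. Therefore $\mathscr H_\rho=L^2$.

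None of these steps is difficult at this point. The only real analytic input is the completeness statement \cref{thm:Hn-trace-completeness}, which depends on Greiner--Koornwinder, and it has already been established. The point to state carefully is the identification $\sigma_N=\sigma_n$, so that the orthogonal complements in the Carnot theorem are taken in the same Hilbert space as in the present statement.
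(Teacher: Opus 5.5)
Your proposal is correct and matches the paper's proof: specialize \cref{thm:carnot-harmonic-filtration} to $\mathbb G=\Hh^n$ and $N=\rho$, verify $(\mathrm{HD})_\rho$ using the density in \cref{thm:Hn-trace-completeness}, and take the dimension from \cref{thm:Hn-harmonic-dimension}. Your identification of $\sigma_N$ with $\sigma_n$, via uniqueness of the measure satisfying the polar formula, is a correct clarification that the paper leaves implicit.
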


\begin{proof}
Apply~\cref{thm:carnot-harmonic-filtration} with
$\mathbb G=\Hh^n$ and $N=\rho$. The density statement in
\cref{thm:Hn-trace-completeness} verifies $(\mathrm{HD})_\rho$, and
\cref{thm:Hn-harmonic-dimension} supplies the dimension.
\end{proof}

\subsection{The \texorpdfstring{$U(n)$}{U(n)}-type refinement}

The preceding decomposition is compatible with the classical explicit
description of Heisenberg harmonics. Let $\mathcal H_{a,b}(\mathbb C^n)$
be the ordinary complex harmonic polynomials that are bihomogeneous of
degree $a$ in $z$ and degree $b$ in $\overline z$. Greiner and
Koornwinder show that, for $m=a+b+2q$, the corresponding copy of
$\mathcal H_{a,b}(\mathbb C^n)$ inside $H_m(\Hh^n)$ is represented by
\begin{equation}\label{eq:Hn-GK-types}
  C_q^{(n/2+b,n/2+a)}\bigl(t+\mathrm{i}|z|^2\bigr)Y(z),
  \qquad Y\in\mathcal H_{a,b}(\mathbb C^n),
\end{equation}
where $C_q^{(\alpha,\beta)}(\zeta)$ denotes the Greiner--Koornwinder
homogeneous polynomial in $\zeta$ and $\overline\zeta$ defined by their
generating function. The displayed formula is given by
\cite[Thm.~3.5]{GreinerKoornwinder1983}.
Moreover,~\cite[Prop.~3.4(a),(b)]{GreinerKoornwinder1983} shows that the
$U(n)$ types are irreducible and pairwise inequivalent. Since the action
preserves $\sigma_n$, distinct types $(a,b)$ are orthogonal. For a fixed
type $(a,b)$, however, the same irreducible representation occurs for every
$q=0,1,2,\ldots$, and these repeated radial copies need not be orthogonal.
\cref{prop:Hn-universal-obstruction} is already the first such
failure, for $(a,b)=(1,0)$ and $q=0,1$.

Because every $\mathcal E_m^{(n)}$ is $U(n)$-invariant, its orthogonal
projection commutes with $U(n)$. Thus the passage from $H_m$ to
$\mathcal K_m^{(n)}$ leaves the $U(n)$ types separate: equivalently, one may
apply Gram--Schmidt only along each repeated radial chain
in~\eqref{eq:Hn-GK-types}.
This gives a concrete $U(n)$-equivariant orthogonal system of harmonic
polynomial traces. It retains the leading homogeneous degree and all
angular representation data, while necessarily mixing lower homogeneous
degrees within a fixed radial chain.

\section{An \texorpdfstring{$\eta$}{eta}-based decomposition of polynomials on \texorpdfstring{$\Hh$}{H}}
\label{sec:eta-decomposition}

We now discuss the Euclidean-style decomposition~\eqref{eq:intro-fischer}. A first analogy with a homogeneous gauge suggests the
degree-two expression $|z|^2+4|t|$, but the absolute value prevents a
polynomial factorization and is non-smooth across $\{t=0\}$. We therefore use
the homogeneous polynomial multiplier
\[
  w(x,y,t)=\eta_+^2(x,y,t)=x^2+y^2+4t.
\]
The notation $\eta_+$ is useful on the region where $w>0$, but every argument
below is an identity between polynomials on all of $\Hh$. In particular, no
claim that $\eta_+$ is a global homogeneous norm is needed.
More precisely, one may set $\eta_+=\sqrt w$ on $\{w>0\}$, and throughout
the formulas below
\[
  \eta_+^{2k}:=(\eta_+^2)^k=w^k.
\]
Thus, only globally defined polynomial powers occur.

\subsection{The Fischer operator}

The polynomial $w$ is homogeneous of degree two. In addition, a direct calculation gives
\[
  Xw=2x+8y,
  \qquad
  Yw=2y-8x,
  \qquad
  \Delta_Hw=4.
\]
Thus $w=\eta_+^2$ is not $\Delta_H$-harmonic; its role is that of a
special polynomial multiplier.
For $j\geq0$ define the Fischer operator
\[
  A_j:P_j(\Hh)\longrightarrow P_j(\Hh),
  \qquad
  A_jq=\Delta_H(wq).
\]

\begin{lemma}\label{lem:Fischer-auto}
For every $j\geq0$, the map $A_j$ is an automorphism.
\end{lemma}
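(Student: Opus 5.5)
Since $P_j(\Hh)$ is finite-dimensional and $A_j$ maps it into itself, it suffices to show that $A_j$ is injective. First I will make the operator explicit. By the Leibniz rule and $\Delta_Hw=4$,
\[
  A_jq=4q+2\bigl((Xw)(Xq)+(Yw)(Yq)\bigr)+w\,\Delta_Hq .
\]
Using $Xw=2x+8y$ and $Yw=2y-8x$, together with the identities
\[
  xX+yY=x\partial_x+y\partial_y=:E_z,
  \qquad
  yX-xY=\mathcal R+2|z|^2\partial_t,
\]
this becomes
\[
  A_jq=(4+4E_z+16\mathcal R)q+32|z|^2\partial_tq
   +\bigl(|z|^2+4t\bigr)\bigl(\Delta_z+4\mathcal R\partial_t+4|z|^2\partial_t^2\bigr)q .
\]
Every piece of this formula commutes with the rotations $R_\alpha$, since $w$ is rotation invariant. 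I will therefore first split $P_j(\Hh)$ into rotation types $\nu$, i.e.\ into spans of $t^\gamma z^a\overline z^{\,b}$ with $a+b+2\gamma=j$ and $a-b=\nu$. It then suffices to prove injectivity of $A_j$ on each type. On such a monomial, $E_z$ acts by $a+b$ and $\mathcal R$ by $-\imath\nu$.

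The first reduction is a lowest-$t$-degree argument. Write $q=\sum_{\gamma\geq\gamma_0}t^\gamma q_\gamma(z)$ with $q_{\gamma_0}\neq0$. Each term of $A_j$ either preserves the $t$-degree, lowers it by one, lowers it by two, or raises it by one (the term $4t\Delta_z$). The only contribution to $t^{\gamma_0-2}$ is $4|z|^2\cdot|z|^2\partial_t^2$ applied to $t^{\gamma_0}q_{\gamma_0}$, which equals
\[
  4\gamma_0(\gamma_0-1)|z|^4q_{\gamma_0}\,t^{\gamma_0-2}.
\]
Hence $A_jq=0$ forces $\gamma_0\in\{0,1\}$. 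The same bookkeeping applies to the coefficient of $t^{\gamma_0-1}$: it involves only $q_{\gamma_0}$ and $q_{\gamma_0+1}$. This yields a recursion expressing the $|z|^4$-multiples of higher components in terms of lower ones. The plan is to run this recursion upward in $\gamma$ and show it is overdetermined, so that it forces $q=0$.

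The remaining case analysis is the main obstacle, because the raising term $4t\Delta_z$ prevents a clean triangular structure in the $t$-grading. What I would try first is to change variables to $(z,\overline z,w)$, which is a polynomial-invertible change since $t=(w-|z|^2)/4$. In these coordinates
\[
  Z=\partial_z+(1+4\imath)\overline z\,\partial_w,
  \qquad
  \overline Z=\partial_{\overline z}+(1-4\imath)z\,\partial_w .
\]
Then $A_j$ becomes multiplication by $w$ followed by $\Delta_H=4\partial_z\partial_{\overline z}+(\text{terms containing }\partial_w)$. Within a fixed rotation type, I would order the basis $w^k z^a\overline z^{\,b}$ lexicographically, first by $k$ and then by $\min(a,b)$. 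The aim is to show that the diagonal entries have the form $4(k+1)(\dots)$ plus a nonreal multiple of $\nu$. The nonreal contribution comes from $1\pm4\imath$ and from $16\mathcal R$, and it cannot vanish for $\nu\neq0$. For $\nu=0$, the real part should be a positive integer combination. If the ordering still fails to be triangular, the fallback is to prove injectivity in the equivalent form $H_{j+2}(\Hh)\cap wP_j(\Hh)=\{0\}$. Combined with the surjectivity of $\Delta_H:P_{j+2}\to P_j$ from \ref{sec:Hn-harmonic-basis} and the count $\dim P_{j+2}-\dim P_j=j+3=\dim H_{j+2}(\Hh)$, this would again give that $A_j$ is bijective.
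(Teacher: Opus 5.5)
Your setup is sound. The Leibniz expansion, the identities $xX+yY=E_z$ and $yX-xY=\mathcal R+2|z|^2\partial_t$, the reduction to injectivity on each rotation type, and the conclusion $\gamma_0\in\{0,1\}$ are all correct. The gap is that none of the three routes you then sketch proves that a rotation block is injective, and that is the whole content of the lemma.

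\emph{The upward $t$-recursion.} It is not overdetermined. A $(j,\nu)$ block has $L+1$ scalar unknowns and $L+1$ equations. The recursion leaves two scalars free and two terminal equations unused, so you would still have to show that a $2\times2$ determinant is nonzero.

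\emph{The triangular ordering.} This fails. Inside a fixed $(j,\nu)$ block the power $k$ already determines $(a,b)$, so your lexicographic order is just the order in $k$. A direct computation in your own coordinates gives
\[
  A_j\bigl(w^kz^a\overline z^{\,b}\bigr)
  =68k(k+1)\,w^{k-1}z^{a+1}\overline z^{\,b+1}
  +4(k+1)(a+b+1-4\imath\nu)\,w^kz^a\overline z^{\,b}
  +4ab\,w^{k+1}z^{a-1}\overline z^{\,b-1}.
\]
This is tridiagonal, and every adjacent pair $k,k+1$ with $0\leq k<L$ is coupled in both directions. Hence no ordering makes the block triangular, and the diagonal entries alone decide nothing. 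For instance, for $j=2$ and $\nu=0$ one finds $A_2|z|^2=12|z|^2+4w$ and $A_2w=136|z|^2+8w$.

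\emph{The fallback.} This is circular. Since $q\mapsto wq$ is injective, $\ker A_j\cong H_{j+2}(\Hh)\cap wP_j(\Hh)$ holds with no further input. The surjectivity of $\Delta_H$ and the dimension count contribute nothing toward showing that this intersection is zero.

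Two genuinely different arguments are missing.

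For $\nu\neq0$, your instinct about the nonreal contribution is right, but it becomes a proof only after a symmetrization. The paired off-diagonal entries $4ab$ and $68(k+1)(k+2)$ have positive product for $k<L$. So a positive diagonal change of basis makes the off-diagonal part real symmetric without altering the diagonal. Then $\operatorname{Im}\langle Bv,v\rangle=-16\nu\sum_k(k+1)|v_k|^2\neq0$ for $v\neq0$. This is exactly the paper's argument, in the equivalent basis $\tau^\ell r^{j-2\ell}e^{\imath\nu\theta}$ with $\tau=w/4$.

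For $\nu=0$ there is no such mechanism. The block is real and can be indefinite: the $j=2$ block above has determinant $-448$ although its off-diagonal product is positive. So your hope that the real part is ``a positive integer combination'' cannot yield injectivity. The paper instead observes that, with $u=|z|^2$, $\Delta_Hf(u,t)=4u(f_{uu}+u^{-1}f_u+f_{tt})$ is an axisymmetric three-dimensional Laplacian. Hence the rotation-invariant harmonics of degree $2N=j+2$ are spanned by $Z_N=R^NP_N(t/R)$, where $R=(|z|^4+t^2)^{1/2}$. It then shows that $w$ does not divide $Z_N$ by evaluating at the point $(|z|^2,t)=(4,-1)\in\{w=0\}$. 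There $Z_N=(-1)^NS_N$, with
\[
  S_N=17^{N/2}P_N(1/\sqrt{17})
  =\sum_{k}(-1)^k4^k\binom{2k}{k}\binom{N}{2k}\equiv1\pmod{8}.
\]
An explicit nonvanishing computation of this kind is needed in the invariant block, and your proposal contains nothing that could replace it.
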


The proof is technical and it is thus postponed to~\ref{sec:fischer-appendix}.
It resolves the
polynomial spaces into rotation types:
\begin{itemize}
    \item the nonzero types are controlled by a definite imaginary part;
    \item while the invariant type reduces to a nonvanishing
Legendre value.
\end{itemize}

\subsection{A Dirichlet-type algebraic lemma}

The same automorphisms prove the stronger inhomogeneous correction used in the Dirichlet interpretation.

\begin{proposition}\label{prop:Dirichlet-correction}
Let $p\in P_m(\Hh)$. Then, there is a unique polynomial $q$ of weighted degree at most $m-2$ such that
\begin{equation}\label{eq:inhomogeneous-correction}
  \Delta_H\bigl((1-w)q\bigr)=-\Delta_Hp.
\end{equation}
Consequently,
\[
  u=p+(1-w)q
\]
is a harmonic polynomial and $u=p$ on the paraboloid
\[
  \{w=1\}=\left\{t=\frac{1-|z|^2}{4}\right\}.
\]
\end{proposition}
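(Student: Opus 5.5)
Write $q=\sum_{j=0}^{m-2}q_j$ with $q_j\in P_j(\Hh)$, and expand the identity~\eqref{eq:inhomogeneous-correction} by weighted homogeneous degree. Since $\Delta_H$ lowers degree by two and $w$ raises it by two, we have
\[
  \Delta_H\bigl((1-w)q\bigr)=\sum_j\Delta_Hq_j-\sum_j A_jq_j,
\]
where $\Delta_Hq_j\in P_{j-2}(\Hh)$ and $A_jq_j=\Delta_H(wq_j)\in P_j(\Hh)$. The right-hand side $-\Delta_Hp$ lies in $P_{m-2}(\Hh)$. Comparing the components of degree $k$ for $0\le k\le m-2$ gives the triangular system
\[
  A_{m-2}q_{m-2}=\Delta_Hp,\qquad
  A_kq_k=\Delta_Hq_{k+2}\quad(0\le k\le m-3).
\]
Components of degree $k<0$ are trivially zero. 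No equation of degree above $m-2$ arises, because $\deg q\le m-2$.

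I would solve this system from the top down using \cref{lem:Fischer-auto}. First, $q_{m-2}=A_{m-2}^{-1}\Delta_Hp$ is uniquely determined. Then each $q_k=A_k^{-1}\Delta_Hq_{k+2}$ is determined in turn, descending in steps of two; only degrees of the parity of $m$ are nonzero. The degrees of the other parity satisfy $A_kq_k=0$, so they vanish by injectivity. This gives existence. Uniqueness is the same computation: if $q$ and $q'$ both satisfy~\eqref{eq:inhomogeneous-correction}, their difference $r$ satisfies the homogeneous system $A_kr_k=\Delta_Hr_{k+2}$ with $A_{m-2}r_{m-2}=0$. Downward induction then forces $r=0$.

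The consequences follow directly. First, $\Delta_Hu=\Delta_Hp+\Delta_H((1-w)q)=0$ by construction. Second, $u-p=(1-w)q$ vanishes on $\{w=1\}$. Third, $u$ is a polynomial of weighted degree at most $m$. The cases $m\le1$ deserve a brief remark: then $\Delta_Hp=0$ and $q=0$ is the unique polynomial of degree at most $m-2<0$, i.e.\ $q=0$ by convention.

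The only substantive input is the invertibility of each $A_j$, which \cref{lem:Fischer-auto} supplies. The one step needing care is the degree bookkeeping. The identity~\eqref{eq:inhomogeneous-correction} mixes two degrees for each $q_j$, so the system has to be arranged so that every equation involves a single unknown $A_kq_k$, with all other terms already determined.
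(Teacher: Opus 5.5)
Your proposal is correct and is essentially the paper's proof. You compare weighted homogeneous components to get the triangular system $A_{m-2}q_{m-2}=\Delta_Hp$, $A_kq_k=\Delta_Hq_{k+2}$, solve it top-down with \cref{lem:Fischer-auto}, and get uniqueness from the same recursion with zero data. One phrase needs tightening: the opposite-parity components do not all satisfy $A_kq_k=0$ a priori, but they vanish by downward induction starting from $A_{m-3}q_{m-3}=0$, which holds because $q_{m-1}=0$.
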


\begin{proof}
If $m=0$ or $m=1$, then $\Delta_Hp=0$ and the only polynomial of weighted degree at most $m-2$ is $q=0$. We can thus assume that $m\geq2$.

\smallskip
Write $q=\sum_jq_j$ in weighted homogeneous components and set $q_j=0$ outside $0\leq j\leq m-2$. The degree-$j$ component of the LHS of~\eqref{eq:inhomogeneous-correction} is
\begin{equation}\label{eq:degree-recursion}
  \Delta_Hq_{j+2}-A_jq_j.
\end{equation}
Choose $q_{m-2}=A_{m-2}^{-1}(\Delta_Hp)$, and, descending through degrees of the same parity, define
\[
  q_j=A_j^{-1}(\Delta_Hq_{j+2}).
\]
Set the components of the opposite parity equal to zero. Formula~\eqref{eq:degree-recursion} then proves~\eqref{eq:inhomogeneous-correction}. The same argument, with zero RHS, proves uniqueness.
\end{proof}

\begin{remark}
By decomposing an arbitrary polynomial datum into weighted homogeneous
components and summing the corresponding corrections, the proposition also
gives a unique polynomial $q$ for arbitrary polynomial $p$, with the same
identity~\eqref{eq:inhomogeneous-correction}.
\end{remark}

\subsection{Euclidean-style decomposition}

\begin{theorem}[Decomposition via $\eta_+^2$]\label{thm:eta-decomposition}
For every $m\geq0$, there is an algebraic direct-sum decomposition
\begin{equation}\label{eq:eta-direct}
  P_m(\Hh)=H_m(\Hh)\oplus \eta_+^2P_{m-2}(\Hh),
  \qquad \eta_+^2=|z|^2+4t,
\end{equation}
where $P_j(\Hh)=\{0\}$ for $j<0$.
Equivalently, every $p\in P_m(\Hh)$ has a unique representation
\[
  p=h+\eta_+^2q,
  \qquad h\in H_m(\Hh),\quad q\in P_{m-2}(\Hh).
\]
\end{theorem}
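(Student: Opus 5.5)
The decomposition follows directly from \cref{lem:Fischer-auto}, in the same way the Euclidean Fischer decomposition follows from invertibility of $q\mapsto\Delta(|x|^2q)$. For $m=0,1$ we have $P_{m-2}(\Hh)=\{0\}$ and $\Delta_H P_m(\Hh)\subseteq P_{m-2}(\Hh)=\{0\}$. Hence $H_m(\Hh)=P_m(\Hh)$, and there is nothing to prove. From now on assume $m\geq2$.

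Two preliminary facts are needed. First, $w$ is homogeneous of degree two, so multiplication by $w$ maps $P_{m-2}(\Hh)$ into $P_m(\Hh)$. Second, $\Delta_H$ maps $P_m(\Hh)$ into $P_{m-2}(\Hh)$. Both summands on the right of \eqref{eq:eta-direct} therefore lie in $P_m(\Hh)$.

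\emph{Existence.} Given $p\in P_m(\Hh)$, we have $\Delta_Hp\in P_{m-2}(\Hh)$. By \cref{lem:Fischer-auto}, $A_{m-2}$ is invertible, so we may define
\[
  q=A_{m-2}^{-1}(\Delta_Hp)\in P_{m-2}(\Hh),\qquad h=p-wq.
\]
Then $h\in P_m(\Hh)$ and $\Delta_Hh=\Delta_Hp-A_{m-2}q=0$, so $h\in H_m(\Hh)$. Equivalently, this is the homogeneous case of \cref{prop:Dirichlet-correction}, with $q$ there chosen so that only the top component survives.

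\emph{Uniqueness.} It suffices to show $H_m(\Hh)\cap wP_{m-2}(\Hh)=\{0\}$. Suppose $wq$ is harmonic with $q\in P_{m-2}(\Hh)$. Then $A_{m-2}q=\Delta_H(wq)=0$, and injectivity of $A_{m-2}$ gives $q=0$.

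As a consistency check, counting dimensions gives $\dim P_m-\dim P_{m-2}=m+1$, which matches \cref{thm:Hn-harmonic-dimension} for $n=1$. This also follows independently from the surjectivity of $\Delta_H:P_m\to P_{m-2}$, which is equivalent to $A_{m-2}$ being onto.

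Given \cref{lem:Fischer-auto}, every step above is a one-line algebraic verification. All of the real difficulty sits in the invertibility of $A_j$, which the paper postpones to the appendix. If that lemma had to be proved here, I would reduce to injectivity, since $A_j$ is an endomorphism of the finite-dimensional space $P_j(\Hh)$. I would then split $P_j(\Hh)$ into rotation types $e^{\imath\nu\theta}$, which $A_j$ preserves because $\Delta_H$ and $w$ are $R_\alpha$-invariant. On each type I would use the product rule
\[
  \Delta_H(wq)=4q+2\bigl((Xw)Xq+(Yw)Yq\bigr)+w\Delta_Hq .
\]
In this expression the terms $8yXq-8xYq$ contribute $\imath\nu$-type imaginary parts, which should make $A_j$ definite on the nonzero modes. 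The invariant mode would then be handled as a triangular system in $|z|^2$ and $t$ whose diagonal is a nonvanishing (Legendre-type) value.
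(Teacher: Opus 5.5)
Your proof is correct and is essentially the paper's proof: existence comes from setting $q=A_{m-2}^{-1}(\Delta_Hp)$ and $h=p-wq$, uniqueness holds because $A_{m-2}q=\Delta_H(wq)=0$ forces $q=0$, and the cases $m=0,1$ are trivial. One correction to your aside: surjectivity of $\Delta_H:P_m(\Hh)\to P_{m-2}(\Hh)$ follows from surjectivity of $A_{m-2}=\Delta_H\circ M_w$ but is not equivalent to it, because it gives only the dimension count and not $H_m(\Hh)\cap wP_{m-2}(\Hh)=\{0\}$, which is precisely the nontrivial content of the lemma (settled in the invariant block by the Legendre-value argument).
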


\begin{proof}
Let $m\geq2$ and $p\in P_m(\Hh)$.
By~\cref{lem:Fischer-auto}, there is a unique polynomial $q\in P_{m-2}(\Hh)$ such that
\[
  A_{m-2}q=\Delta_Hp.
\]
Then $h=p-wq$ is homogeneous of degree $m$ and satisfies $\Delta_Hh=0$, proving existence. If $wq$ is harmonic, then $A_{m-2}q=0$, so~\cref{lem:Fischer-auto} gives $q=0$. Thus the sum is direct and the representation is unique.
\end{proof}

\begin{remark}
The direct sum in~\eqref{eq:eta-direct} is algebraic, but it is \textbf{not} an
orthogonal decomposition of polynomial traces. For instance,
$t\in H_2(\Hh)$ and $w=\eta_+^2\in wP_0(\Hh)$, while
\eqref{eq:sphere-param} and~\eqref{eq:sigma-explicit} give
\[
  \langle t,w\rangle_\sigma
  =2\pi\int_0^\pi
      \cos\phi\bigl(\sin\phi+4\cos\phi\bigr)\,\dd\phi
  =4\pi^2\neq0.
\]
\end{remark}

\begin{corollary}\label{cor:dimensions}
For every $m\geq0$,
\[
  \dim H_m(\Hh)=m+1.
\]
More explicitly, for $k\geq0$,
\[
  \dim P_{2k}(\Hh)=(k+1)^2,
  \qquad
  \dim P_{2k+1}(\Hh)=(k+1)(k+2).
\]
\end{corollary}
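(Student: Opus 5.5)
The corollary follows from the Fischer decomposition of \cref{thm:eta-decomposition} together with a direct count of $\dim P_m(\Hh)$.

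\emph{Counting $P_m(\Hh)$.} By \eqref{eq:Hn-weighted-polynomial-splitting} with $n=1$,
\[
  \dim P_m(\Hh)=\sum_{\gamma=0}^{\lfloor m/2\rfloor}\dim\mathscr P_{m-2\gamma}(\R^2)
  =\sum_{\gamma=0}^{\lfloor m/2\rfloor}(m-2\gamma+1).
\]
For $m=2k$ this is the sum of the odd numbers $1,3,\dots,2k+1$, which equals $(k+1)^2$. For $m=2k+1$ it is the sum of the even numbers $2,4,\dots,2k+2$, which equals $(k+1)(k+2)$. This establishes the explicit formulas.

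\emph{Counting $H_m(\Hh)$.} \cref{thm:eta-decomposition} gives $P_m(\Hh)=H_m(\Hh)\oplus wP_{m-2}(\Hh)$. Multiplication by the nonzero polynomial $w$ is injective on polynomials, since the polynomial ring is an integral domain. Hence $\dim wP_{m-2}(\Hh)=\dim P_{m-2}(\Hh)$, and
\[
  \dim H_m(\Hh)=\dim P_m(\Hh)-\dim P_{m-2}(\Hh).
\]
For $m=2k$ with $k\ge1$ this gives $(k+1)^2-k^2=2k+1=m+1$. For $m=2k+1$ with $k\ge1$ it gives $(k+1)(k+2)-k(k+1)=2(k+1)=m+1$. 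For $m=0,1$ we have $P_{m-2}=\{0\}$, so $\dim H_m=\dim P_m=m+1$. This matches the general formula.

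There is no genuine obstacle in this argument: all of the analytic content sits in \cref{lem:Fischer-auto}, which is assumed. The only thing to check is that the direct sum is a genuine vector-space decomposition inside the finite-dimensional space $P_m(\Hh)$, so that dimensions add. That holds because both summands are subspaces of $P_m(\Hh)$, given that $w$ has degree two.
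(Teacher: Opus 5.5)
Your proof is correct and is essentially the paper's argument: count $\dim P_m(\Hh)$ via the weighted monomials, then apply the Fischer decomposition of \cref{thm:eta-decomposition} with injectivity of multiplication by $w$ to get $\dim H_m(\Hh)=\dim P_m(\Hh)-\dim P_{m-2}(\Hh)=m+1$. Your separate check of $m=0,1$ is a harmless addition, since there $P_{m-2}(\Hh)=\{0\}$ and $H_m(\Hh)=P_m(\Hh)$ trivially.
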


\begin{proof}
The formulas for $P_m$ follow by counting the monomials $x^ay^bt^c$ such that $a+b+2c=m$. Finally, since multiplication by the nonzero polynomial $w$ is injective,~\eqref{eq:eta-direct} gives
\[
  \dim H_m(\Hh)=\dim P_m(\Hh)-\dim P_{m-2}(\Hh)=m+1.
\]
\end{proof}

\begin{corollary}\label{cor:iterated}
Every $p_m\in P_m(\Hh)$ has a unique expansion
\[
  p_m=\sum_{k=0}^{\lfloor m/2\rfloor}\eta_+^{2k}h_{m-2k},
  \qquad h_{m-2k}\in H_{m-2k}(\Hh).
\]
\end{corollary}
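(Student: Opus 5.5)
The plan is to iterate \cref{thm:eta-decomposition} by induction on $m$, in the same way the Euclidean Fischer decomposition yields the classical expansion in powers of $|x|^2$.

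\emph{Base cases.} For $m=0$ and $m=1$ we have $P_{m-2}(\Hh)=\{0\}$. \Cref{thm:eta-decomposition} then gives $P_m(\Hh)=H_m(\Hh)$, so the expansion consists of the single term $h_m=p_m$ and is trivially unique.

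\emph{Existence for $m\geq2$.} Apply \cref{thm:eta-decomposition} to write $p_m=h_m+wq_{m-2}$, with $h_m\in H_m(\Hh)$ and $q_{m-2}\in P_{m-2}(\Hh)$. By the induction hypothesis,
\[
  q_{m-2}=\sum_{k=0}^{\lfloor (m-2)/2\rfloor}w^kh_{m-2-2k},
  \qquad h_{m-2-2k}\in H_{m-2-2k}(\Hh).
\]
Multiplying by $w$ and reindexing $k\mapsto k+1$ gives the desired sum over $1\le k\le\lfloor m/2\rfloor$. Adding $h_m$ supplies the $k=0$ term. All of this takes place inside the polynomial ring, since $\eta_+^{2k}$ means $w^k$.

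\emph{Uniqueness.} Suppose $\sum_k w^kh_{m-2k}=0$. Group the terms with $k\geq1$ as $w\cdot\sum_{k\geq1}w^{k-1}h_{m-2k}$. The factor $\sum_{k\geq1}w^{k-1}h_{m-2k}$ lies in $P_{m-2}(\Hh)$, because each summand is homogeneous of degree $2(k-1)+m-2k=m-2$. The directness in \eqref{eq:eta-direct} then forces $h_m=0$ and $w\sum_{k\geq1}w^{k-1}h_{m-2k}=0$. Multiplication by the nonzero polynomial $w$ is injective, since the polynomial ring is an integral domain, so $\sum_{k\ge1}w^{k-1}h_{m-2k}=0$. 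This is an expansion of $0\in P_{m-2}(\Hh)$, so the induction hypothesis gives $h_{m-2k}=0$ for all $k\geq1$.

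There is no genuine obstacle here; all the substance lies in \cref{lem:Fischer-auto}, used through \cref{thm:eta-decomposition}. The only points to check are the degree bookkeeping, namely that $w^{k-1}h_{m-2k}$ is indeed in $P_{m-2}(\Hh)$ and that the upper summation limits match after reindexing, and the injectivity of multiplication by $w$.
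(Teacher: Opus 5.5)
Your proposal is correct and follows the paper's own argument: you apply \cref{thm:eta-decomposition} repeatedly to the lower-degree factor, and uniqueness at each step gives uniqueness of the whole expansion. You simply write this out as an explicit induction, with the index check $\lfloor (m-2)/2\rfloor+1=\lfloor m/2\rfloor$ and the injectivity of multiplication by $w$ spelled out.
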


\begin{proof}
Apply~\cref{thm:eta-decomposition} successively to the lower-degree factor.
Uniqueness at every step gives uniqueness of the full expansion.
\end{proof}

\begin{remark}
For the heat operator $\mathcal H=\Delta_x-\partial_t$ and the degree-two polynomial
$v(x,t)=t-|x|^2$, Kogoj and Lanconelli~\cite[Thm.~1.1]{KogojLanconelli2022} prove that for every polynomial $p$ there is a unique polynomial $q$ satisfying
\[
  \mathcal H(vq)=-\mathcal Hp.
\]
Then $p+vq$ is caloric and agrees with $p$ on $\{v=0\}$.
\cref{prop:Dirichlet-correction} has the same degree-two correction mechanism under anisotropic scaling, but uses the Heisenberg sub-Laplacian and the paraboloid $\{w=1\}$.
After the translation $s=t-1/4$, one has
\[
1-w=-(|z|^2+4s), 
\]
which makes the common anisotropic degree-two mechanism
particularly transparent while leaving the operators and geometries
distinct.
\end{remark}

\begin{remark}
On a compact subset of a fixed level $\{w=c\}$, the iterated expansion in~\cref{cor:iterated} shows that polynomial restrictions lie in the span of harmonic-polynomial restrictions. Stone--Weierstrass therefore gives uniform density on such a compact set. The full level $\{w=c\}$ is unbounded, and this observation neither supplies a global $L^2$ theorem there nor implies an orthogonality statement. It also does not replace the independent completeness theorem on the Kor\'anyi sphere, where $w$ is not constant.
\end{remark}

\section{Concluding remarks}

The Heisenberg spherical picture has two complementary forms. Homogeneous
harmonic traces are algebraically direct and complete, and inequivalent
angular types are orthogonal; homogeneous degrees are not. The
degree-one/degree-three obstruction proves that no finite positive
full-support surface measure can restore raw degree orthogonality on any
$\Hh^n$.
Orthogonalizing the cumulative harmonic spaces instead gives
\eqref{eq:Hn-corrected-Hilbert-sum}, with the same dimensions and $U(n)$
types as the original homogeneous spaces.
\ref{sec:Hn-harmonic-basis} supplies an explicit
triangular basis construction behind those dimensions.

On a general Carnot group, the harmonic construction is complete on its
closed trace span and becomes a decomposition of all spherical $L^2$ exactly
under the separate condition $(\mathrm{HD})_N$. The
filtration~\eqref{eq:carnot-all-polynomial-sum}, on the other hand, is unconditional.

The polynomial factorization~\eqref{eq:eta-direct} is separate from these
spherical constructions. Its proof rests on the special polynomial
$w=|z|^2+4t$ and on the automorphisms
$A_j=\Delta_H\circ M_w$. Extending this Fischer mechanism to
higher-dimensional Heisenberg groups remains a natural question. Any
extension to more general Carnot groups would require a polynomial multiplier
and a new invertibility argument; it does not follow formally from the
existence of a homogeneous norm or a polar integration formula.

\section*{Acknowledgments}

The author thanks Prof.~Vladimir Georgiev (University of Pisa) for useful feedback and insightful remarks on an earlier draft. The author also thanks the anonymous referee for the suggestions that led to a clearer separation of the spherical completeness result from the algebraic decomposition.

\appendix
\renewcommand{\thetheorem}{\Alph{section}.\arabic{theorem}}
\renewcommand{\thelemma}{\Alph{section}.\arabic{theorem}}
\renewcommand{\thecorollary}{\Alph{section}.\arabic{theorem}}
\renewcommand{\theproposition}{\Alph{section}.\arabic{theorem}}
\renewcommand{\thedefinition}{\Alph{section}.\arabic{theorem}}
\renewcommand{\theremark}{\Alph{section}.\arabic{theorem}}
\renewcommand{\theexample}{\Alph{section}.\arabic{theorem}}

\section{Constructive harmonic basis on \texorpdfstring{$\Hh^n$}{Hn}}
\label{sec:Hn-harmonic-basis}

The harmonic-polynomial construction extends cleanly to every
Heisenberg group, independently of the orthogonality question for traces on a
Kor\'anyi sphere. This section gives a self-contained construction. Besides producing bases, the argument proves directly that the sub-Laplacian is onto
between consecutive weighted homogeneous polynomial spaces. Let
\[
  \Hh^n=\C^n\times\R,
  \qquad z_j=x_j+\imath y_j,
\]
with group law
\[
  (z,t)(z',t')=
  \left(z+z',t+t'+2\operatorname{Im}
  \sum_{j=1}^n z_j\overline{z'_j}\right).
\]
The horizontal vector fields and sub-Laplacian are
\[
  X_j=\partial_{x_j}+2y_j\partial_t,
  \qquad
  Y_j=\partial_{y_j}-2x_j\partial_t,
  \qquad
  \Delta_H=\sum_{j=1}^n(X_j^2+Y_j^2).
\]
Set $D=2n$, and
\[
    r^2=|z|^2=\sum_{j=1}^n(x_j^2+y_j^2),
  \qquad
  \mathcal R=\sum_{j=1}^n
  \bigl(y_j\partial_{x_j}-x_j\partial_{y_j}\bigr).
\]
As mentioned already, a direct expansion gives
\begin{equation}\label{eq:Hn-expanded-Delta}
  \Delta_H=\Delta_z+4\mathcal R\,\partial_t
  +4r^2\partial_t^2,
  \qquad
  \Delta_z=\sum_{j=1}^n
  (\partial_{x_j}^2+\partial_{y_j}^2).
\end{equation}
Horizontal variables have weight one, while $t$ has weight two. Let
$P_m(\Hh^n)$ be the complex polynomials of weighted homogeneous degree $m$,
and
\[
  H_m(\Hh^n)=\ker\bigl(\Delta_H:P_m(\Hh^n)
  \longrightarrow P_{m-2}(\Hh^n)\bigr).
\]
As usual, a polynomial space with a negative subscript is understood to be
equal to $\{0\}$.

For the horizontal variables alone, denote by $\mathscr P_d(\R^D)$ the
ordinary homogeneous polynomials of degree $d$, and by
\[
  \mathscr Y_d(\R^D)
  =\ker\bigl(\Delta_z:\mathscr P_d(\R^D)
  \longrightarrow\mathscr P_{d-2}(\R^D)\bigr)
\]
the Euclidean harmonic polynomials.

\begin{lemma}
\label{lem:Euclidean-right-inverse}
For $d\geq0$, define $\mathcal G_d$ as follows:
\begin{equation}\label{eq:Euclidean-right-inverse}
  \mathcal G_dq
  =\sum_{j=0}^{\lfloor d/2\rfloor}
  \frac{(-1)^j r^{2j+2}\Delta_z^jq}
  {\displaystyle\prod_{\ell=0}^{j}
  2(\ell+1)(2d+D-2\ell)},
  \qquad q\in\mathscr P_d(\R^D).
\end{equation}
Then $\mathcal G_dq\in\mathscr P_{d+2}(\R^D)$ and
\[
  \Delta_z\mathcal G_dq=q.
\]
In particular,
$\Delta_z:\mathscr P_{d+2}(\R^D)\to\mathscr P_d(\R^D)$ is surjective.
\end{lemma}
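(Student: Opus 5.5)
The plan is to prove the formula by a direct telescoping computation. The only input is the classical product rule for $\Delta_z$ applied to a radial power times a homogeneous polynomial.

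\emph{Step 1: the product identity.} For $f\in\mathscr P_e(\R^D)$ and $k\geq1$, I would first establish
\[
  \Delta_z\bigl(r^{2k}f\bigr)
  =2k(2k+D-2+2e)\,r^{2k-2}f+r^{2k}\Delta_zf.
\]
This follows from $\Delta_z(uv)=u\Delta_zv+2\nabla u\cdot\nabla v+v\Delta_zu$ together with three facts:
\begin{itemize}
\item $\nabla r^{2k}=2k\,r^{2k-2}x$;
\item $\Delta_z r^{2k}=2k(2k+D-2)r^{2k-2}$;
\item Euler's relation $x\cdot\nabla f=ef$.
\end{itemize}
Homogeneity of $\mathcal G_dq$ of degree $d+2$ is immediate, since each summand has degree $2j+2+(d-2j)$.

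\emph{Step 2: apply the identity to each summand.} Take $f=\Delta_z^jq$, so that $e=d-2j$, and take $k=j+1$. The coefficient becomes
\[
2(j+1)\bigl(2j+D+2d-4j\bigr)=2(j+1)(2d+D-2j),
\]
which is exactly the $\ell=j$ factor of the denominator. Write $c_j=\prod_{\ell=0}^{j}2(\ell+1)(2d+D-2\ell)$ with $c_{-1}=1$, and set
\[
a_j:=\frac{r^{2j}\Delta_z^jq}{c_{j-1}}.
\]
Then Step 1 gives
\[
\Delta_z\Bigl(\frac{r^{2j+2}\Delta_z^jq}{c_j}\Bigr)=a_j+a_{j+1}.
\]

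\emph{Step 3: telescope.} With $J=\lfloor d/2\rfloor$, the alternating sum collapses:
\[
\Delta_z\mathcal G_dq=\sum_{j=0}^J(-1)^j(a_j+a_{j+1})=a_0+(-1)^Ja_{J+1}.
\]
Here $a_0=q$. Also $a_{J+1}=0$, because $\Delta_z^{J+1}q$ would have degree $d-2J-2<0$.

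\emph{Well-definedness and surjectivity.} The denominators never vanish: for $\ell\leq d/2$ one has $2d+D-2\ell\geq d+D>0$. Surjectivity of $\Delta_z:\mathscr P_{d+2}\to\mathscr P_d$ is then immediate, since $\mathcal G_d$ is a right inverse.

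The only point requiring care is the index bookkeeping in Step 2. One must check that the coefficient produced by the product rule matches precisely the new factor in $c_j$, so that consecutive terms cancel. Step 1 itself is routine.
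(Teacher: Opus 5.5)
Your proof is correct and follows essentially the same route as the paper. Both use the product identity $\Delta_z(r^{2k}f)=r^{2k}\Delta_z f+2k(2k+D-2+2e)r^{2k-2}f$ with $f=\Delta_z^jq$ and $k=j+1$, and both telescope, with the terminal term killed by $\Delta_z^{\lfloor d/2\rfloor+1}q=0$; your normalized terms $a_j$ and the positivity check on the denominators just make the bookkeeping more explicit than the paper does.
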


\begin{proof}
If $q$ is homogeneous of degree $d$, the Euler identity gives, for $a\geq1$,
\begin{equation}\label{eq:radial-Laplacian-identity}
  \Delta_z(r^{2a}q)
  =r^{2a}\Delta_zq
  +2a(2d+D+2a-2)r^{2a-2}q.
\end{equation}
Apply this identity to $\Delta_z^jq$, which has degree $d-2j$, with
$a=j+1$. A simple computation shows that the coefficient of $r^{2j}\Delta_z^jq$ is
\[
  \alpha_j=2(j+1)(2d+D-2j)>0.
\]
In the Laplacian of the sum in~\eqref{eq:Euclidean-right-inverse}, the
$r^{2j}\Delta_z^jq$ term contributed by the radial part of the $j$th
summand cancels the term contributed by the Laplacian part of the
$(j-1)$st summand. Hence only the radial part of the zeroth summand remains,
and it equals $q$. The terminal Laplacian term is zero because
$\Delta_z^{\lfloor d/2\rfloor+1}q=0$. This proves the identity.
\end{proof}

Every polynomial $p\in P_m(\Hh^n)$ has a unique $t$-expansion
\begin{equation}\label{eq:t-expansion-Hn}
  p(z,t)=\sum_{k=0}^{M}t^ku_k(z),
  \qquad
  M=\left\lfloor\frac m2\right\rfloor,
  \qquad
  u_k\in\mathscr P_{m-2k}(\R^D).
\end{equation}
If $u_k=0$ outside the index range $0\leq k\leq M$, then~\eqref{eq:Hn-expanded-Delta}
shows that the coefficient of $t^k$ in $\Delta_Hp$ is
\begin{equation}\label{eq:triangular-coefficient}
  \Delta_zu_k
  +4(k+1)\mathcal Ru_{k+1}
  +4(k+2)(k+1)r^2u_{k+2}.
\end{equation}
Notice that this is triangular when the coefficients are read from the highest power of
$t$ downwards.

\begin{proposition}
\label{prop:Hn-surjectivity}
For every $m\geq2$, the map
\[
  \Delta_H:P_m(\Hh^n)\longrightarrow P_{m-2}(\Hh^n)
\]
is surjective.
\end{proposition}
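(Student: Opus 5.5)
Given a target $g\in P_{m-2}(\Hh^n)$, we construct a preimage $p\in P_m(\Hh^n)$ by solving the triangular system \eqref{eq:triangular-coefficient} from the top power of $t$ downwards. Expand the target as
\[
  g=\sum_{k=0}^{M'}t^kg_k(z),\qquad g_k\in\mathscr P_{m-2-2k}(\R^D),\qquad M'=\left\lfloor\frac{m-2}{2}\right\rfloor,
\]
and seek $p=\sum_{k=0}^{M}t^ku_k$ as in \eqref{eq:t-expansion-Hn}. Note that $M=M'+1$. By \eqref{eq:triangular-coefficient}, the equation $\Delta_Hp=g$ is equivalent to the system
\[
  \Delta_zu_k=g_k-4(k+1)\mathcal Ru_{k+1}-4(k+2)(k+1)r^2u_{k+2},
  \qquad 0\leq k\leq M',
\]
with the convention $u_k=0$ for $k>M$.

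We solve this descending in $k$. Set the top coefficient $u_M=0$; any choice would do, since $u_M$ only enters the equations for $k=M-1$ and $k=M-2$ through known terms. Then, for $k=M'$ down to $0$, the right-hand side involves only the already chosen $u_{k+1},u_{k+2}$. We check it lies in $\mathscr P_{m-2-2k}(\R^D)$. The term $g_k$ has that degree by definition. The operator $\mathcal R$ is a degree-preserving first-order operator in $z$, so $\mathcal Ru_{k+1}\in\mathscr P_{m-2k-2}$. Finally $r^2u_{k+2}$ has degree $2+(m-2k-4)=m-2k-2$. So we may set
\[
  u_k=\mathcal G_{m-2-2k}(\text{right-hand side})\in\mathscr P_{m-2k}(\R^D),
\]
using the right inverse of \cref{lem:Euclidean-right-inverse}. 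By that lemma the $k$-th equation holds exactly.

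Assembling these coefficients gives $p\in P_m(\Hh^n)$ with $\Delta_Hp=g$ coefficientwise in $t$, hence surjectivity. Some index care is needed. At the top index $k=M'$, the coefficient $u_{M'+2}$ lies outside the range and is zero. When $m$ is odd, $u_M$ has odd degree $\geq1$ and everything stays consistent. We must also confirm that no equation arises for $k>M'$. It does not: the coefficient of $t^k$ in $\Delta_Hp$ for $k\geq M$ is $\Delta_zu_k$ plus terms in $u_{k+1},u_{k+2}$ that vanish. For $k=M$ this is $\Delta_zu_M$, and $u_M$ has degree $m-2M\in\{0,1\}$, so $\Delta_zu_M=0$ automatically.

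The main obstacle is verifying that \eqref{eq:triangular-coefficient} is correct, including the mixed term $4(k+1)\mathcal Ru_{k+1}$. This comes from $4\mathcal R\partial_t$ in \eqref{eq:Hn-expanded-Delta}, using that $\mathcal R$ commutes with multiplication by $t$. Since the paper already states the formula, the remaining work is only the degree bookkeeping above. As a byproduct, the freedom in choosing each $u_k$ modulo $\mathscr Y_{m-2k}(\R^D)$, including the choice of $u_M$, parametrizes the kernel. This is the parametrization that yields the dimension count later.
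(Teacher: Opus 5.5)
Your proposal is correct and follows the paper's proof essentially verbatim. You set $u_M=0$, solve the coefficient equations \eqref{eq:triangular-coefficient} downward in $k$ via $u_k=\mathcal G_{m-2-2k}(\cdot)$ using \cref{lem:Euclidean-right-inverse}, and note that the $t^M$ coefficient vanishes automatically because $u_M$ has degree $m-2M\in\{0,1\}$.
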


\begin{proof}
Let $M=\lfloor m/2\rfloor$ and write an arbitrary target as
\[
  f(z,t)=\sum_{k=0}^{M-1}t^kf_k(z),
  \qquad f_k\in\mathscr P_{m-2-2k}(\R^D).
\]
We construct $p$ as in~\eqref{eq:t-expansion-Hn}. Set
$u_M=u_{M+1}=0$. For $k=M-1,M-2,\ldots,0$, suppose that
$u_{k+1}$ and $u_{k+2}$ have already been defined and set
\begin{align}
  g_k={}&f_k-4(k+1)\mathcal Ru_{k+1}
  -4(k+2)(k+1)r^2u_{k+2},\label{eq:surjectivity-gk}\\[.4em]
  u_k={}&\mathcal G_{m-2-2k}g_k.\label{eq:surjectivity-uk}
\end{align}
All degrees in these formulas match. Indeed, $g_k$ has horizontal degree
$m-2-2k$, and $u_k$ has horizontal degree $m-2k$. By
\cref{lem:Euclidean-right-inverse}, 
\[
\Delta_zu_k=g_k.
\]
Substitution in
\eqref{eq:triangular-coefficient} therefore makes the coefficient of $t^k$
equal to $f_k$ for each $0\leq k\leq M-1$.
There is no additional
coefficient at $t^M$ since the horizontal coefficient there would have degree
$m-2M\in\{0,1\}$ and hence has zero Euclidean Laplacian. Thus
$\Delta_Hp=f$.
\end{proof}

The triangular system also parametrizes every harmonic polynomial by
ordinary Euclidean harmonic data.

\begin{theorem}[Triangular construction]
\label{thm:Hn-triangular-basis}
Fix $m\geq0$, put $M=\lfloor m/2\rfloor$, and choose any
\[
  v_k\in\mathscr Y_{m-2k}(\R^D),
  \qquad 0\leq k\leq M.
\]
Set $u_{M+1}=u_{M+2}=0$ and $u_M=v_M$.
Recursively, for
$k=M-1,M-2,\ldots,0$, define
\begin{align}
  b_k={}&-4(k+1)\mathcal Ru_{k+1}
  -4(k+2)(k+1)r^2u_{k+2},\label{eq:harmonic-bk}\\[.4em]
  u_k={}&v_k+\mathcal G_{m-2k-2}b_k.\label{eq:harmonic-uk}
\end{align}
Then
\begin{equation}\label{eq:harmonic-from-data}
  h(z,t)=\sum_{k=0}^{M}t^ku_k(z) \in H_m(\Hh^n).
\end{equation}
Moreover, every element of $H_m(\Hh^n)$ is
obtained in exactly one way. Consequently, the construction is a linear
isomorphism
\begin{equation}\label{eq:Hn-vector-space-isomorphism}
  \bigoplus_{k=0}^{\lfloor m/2\rfloor}
  \mathscr Y_{m-2k}(\R^{2n})
  \simeq H_m(\Hh^n).
\end{equation}
\end{theorem}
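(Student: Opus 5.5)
The plan is to verify harmonicity of the constructed $h$ directly through the triangular coefficient formula \eqref{eq:triangular-coefficient}. Then we show that the linear map $\Phi:(v_0,\dots,v_M)\mapsto h$ is injective and surjective onto $H_m(\Hh^n)$, reading off the components from the top power of $t$ downwards.

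\emph{Step 1: well-definedness and degrees.} By descending induction on $k$, we check that $u_k\in\mathscr P_{m-2k}(\R^D)$. Note that $\mathcal R$ preserves horizontal degree and $r^2$ raises it by two. Hence $b_k$ has degree $m-2k-2$, and $\mathcal G_{m-2k-2}b_k$ has degree $m-2k$ by \cref{lem:Euclidean-right-inverse}. So $h\in P_m(\Hh^n)$.

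\emph{Step 2: harmonicity.} By \eqref{eq:triangular-coefficient}, the coefficient of $t^k$ in $\Delta_H h$ is $\Delta_z u_k-b_k$, where $b_k$ is the expression \eqref{eq:harmonic-bk}, extended by the same formula for $k=M$ using $u_{M+1}=u_{M+2}=0$. For $0\le k\le M-1$, we have $\Delta_z u_k=\Delta_z v_k+\Delta_z\mathcal G b_k=0+b_k$, since $v_k$ is Euclidean harmonic and $\mathcal G$ is a right inverse. So this coefficient vanishes. For $k=M$, the coefficient is $\Delta_z v_M=0$ because $b_M=0$. Equivalently, as in the proof of \cref{prop:Hn-surjectivity}, $\Delta_H h$ has no $t^M$ term.

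\emph{Step 3: bijectivity.} $\Phi$ is linear because every step of the recursion is linear. For surjectivity, take $h=\sum_k t^k u_k\in H_m(\Hh^n)$. Harmonicity means $\Delta_z u_k=b_k$ for all $k$, with $b_k$ computed from the actual $u_{k+1},u_{k+2}$, and also $\Delta_z u_M=0$. Define $v_M=u_M\in\mathscr Y_{m-2M}$ and $v_k=u_k-\mathcal G_{m-2k-2}b_k$. Then $\Delta_z v_k=b_k-b_k=0$, so $v_k\in\mathscr Y_{m-2k}$. Running the recursion on these data reproduces $u_M,u_{M-1},\dots$ in turn, by descending induction, since each $b_k$ depends only on already reproduced higher coefficients. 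Hence $\Phi(v)=h$. For injectivity, suppose $\Phi(v)=\Phi(v')$. Then the $t$-expansions agree by uniqueness of \eqref{eq:t-expansion-Hn}, so $u_k=u_k'$ for all $k$. Descending again, $b_k=b_k'$, hence $v_k=u_k-\mathcal G b_k=v_k'$. Alternatively, the map $h\mapsto(v_k)$ just built is a left inverse. This gives the isomorphism \eqref{eq:Hn-vector-space-isomorphism}, and exactly one representation per element.

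\emph{Expected obstacle.} The only delicate point is bookkeeping. We must make sure the recursion for an arbitrary harmonic $h$ uses the same $b_k$ as the construction, which holds because both are determined by the higher coefficients. We must also handle the edge cases $k=M$ and $m-2M\in\{0,1\}$, where $\mathscr Y$ coincides with $\mathscr P$. Everything else follows from \cref{lem:Euclidean-right-inverse} and \eqref{eq:triangular-coefficient}.
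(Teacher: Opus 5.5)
Your proposal is correct and follows essentially the same route as the paper: harmonicity is read off from the triangular coefficient formula \eqref{eq:triangular-coefficient} using $\Delta_z v_k=0$ and $\Delta_z\mathcal G_{m-2k-2}b_k=b_k$. Existence and uniqueness of the representation then come from setting $v_M=u_M$ and $v_k=u_k-\mathcal G_{m-2k-2}b_k$ from the top coefficient downwards, with your degree bookkeeping and explicit handling of the $t^M$ coefficient as harmless additions.
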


\begin{proof}
Because $\Delta_zv_k=0$ and
$\Delta_z\mathcal G_{m-2k-2}b_k=b_k$, equations
\eqref{eq:harmonic-bk}--\eqref{eq:harmonic-uk} give
\[
  \Delta_zu_k+4(k+1)\mathcal Ru_{k+1}
  +4(k+2)(k+1)r^2u_{k+2}=0.
\]
Together with~\eqref{eq:triangular-coefficient}, this proves
$\Delta_Hh=0$.

\smallskip
Conversely, let a harmonic $h$ have coefficients $u_k$ as in
\eqref{eq:t-expansion-Hn}. Starting with the highest coefficient and moving
downwards, define $b_k$ by~\eqref{eq:harmonic-bk} and
\[
  v_M=u_M,
  \qquad
  v_k=u_k-\mathcal G_{m-2k-2}b_k
  \quad(0\leq k<M).
\]
The harmonic coefficient equations imply $\Delta_zv_k=0$. Hence the
$v_k$ lie in the asserted Euclidean harmonic spaces and reconstruct the
original coefficients by~\eqref{eq:harmonic-uk}. This also proves
uniqueness.
\end{proof}

\begin{remark}
Choose any basis $\mathcal B_d$ of Euclidean harmonic spaces
$\mathscr Y_d(\R^{2n})$. In~\cref{thm:Hn-triangular-basis}, take one
$v_k$ at a time to be an element of $\mathcal B_{m-2k}$ and set all the
other $v_j$ equal to zero. The polynomials produced by the recursion form a
basis of $H_m(\Hh^n)$. Thus~\eqref{eq:harmonic-uk} is constructive once a Euclidean homogeneous harmonic-polynomial basis has been chosen.
\end{remark}

\begin{corollary}
\label{cor:Hn-harmonic-dimension}
For every $m\geq0$,
\begin{equation}\label{eq:Hn-dimension}
  \dim H_m(\Hh^n)=\binom{2n+m-1}{2n-1}.
\end{equation}
\end{corollary}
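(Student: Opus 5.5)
The plan is to read the dimension off the linear isomorphism \eqref{eq:Hn-vector-space-isomorphism} of \cref{thm:Hn-triangular-basis}. That isomorphism gives
\[
  \dim H_m(\Hh^n)=\sum_{k=0}^{\lfloor m/2\rfloor}\dim\mathscr Y_{m-2k}(\R^{2n}).
\]
So the proof reduces to a Euclidean count followed by a telescoping sum.

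For the Euclidean count, I use \cref{lem:Euclidean-right-inverse}: the map $\Delta_z:\mathscr P_d(\R^D)\to\mathscr P_{d-2}(\R^D)$ is surjective. Hence
\[
  \dim\mathscr Y_d(\R^D)=\dim\mathscr P_d(\R^D)-\dim\mathscr P_{d-2}(\R^D),
  \qquad
  \dim\mathscr P_d(\R^D)=\binom{d+D-1}{D-1},
\]
with $\mathscr P_{d}=\{0\}$ for $d<0$, and $D=2n$.

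Substituting into the sum, the terms telescope across $k$:
\[
  \sum_{k=0}^{M}\Bigl(\dim\mathscr P_{m-2k}-\dim\mathscr P_{m-2k-2}\Bigr)
  =\dim\mathscr P_m(\R^{2n})-\dim\mathscr P_{m-2M-2}(\R^{2n}),
  \qquad M=\lfloor m/2\rfloor .
\]
Since $m-2M-2\in\{-2,-1\}$ is negative, the last term vanishes. This gives
\[
  \dim H_m(\Hh^n)=\binom{m+2n-1}{2n-1},
\]
which is \eqref{eq:Hn-dimension}.

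There is no real obstacle. The one point to check is the edge case of the telescoping: the lowest index must indeed reach a negative degree, so that nothing survives at the bottom. An alternative route is to use \cref{prop:Hn-surjectivity} directly, which gives $\dim H_m(\Hh^n)=\dim P_m(\Hh^n)-\dim P_{m-2}(\Hh^n)$. Combined with the monomial count from the proof of \cref{prop:Hn-W-dimensions}, this also telescopes to the same binomial coefficient, and it serves as a consistency check.
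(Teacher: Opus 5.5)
Your proposal is correct and matches the paper's proof: \cref{lem:Euclidean-right-inverse} gives $\dim\mathscr Y_d=\dim\mathscr P_d-\dim\mathscr P_{d-2}$, and summing over $d=m,m-2,\ldots$ in \eqref{eq:Hn-vector-space-isomorphism} telescopes to $\binom{m+2n-1}{2n-1}$, with your handling of the bottom index ($m-2M-2<0$) correctly closing the sum. Your alternative consistency check via \cref{prop:Hn-surjectivity} and the weighted monomial count is also valid.
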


\begin{proof}
First,~\cref{lem:Euclidean-right-inverse} gives the exact sequence
\[
  0\longrightarrow\mathscr Y_d(\R^D)
  \longrightarrow\mathscr P_d(\R^D)
  \xrightarrow{\,\Delta_z\,}\mathscr P_{d-2}(\R^D)
  \longrightarrow0.
\]
Therefore, with binomial coefficients having a negative lower index
interpreted as zero, this yields
\[
  \dim\mathscr Y_d(\R^D)
  =\binom{D+d-1}{d}-\binom{D+d-3}{d-2}.
\]
Summing this identity over $d=m,m-2,\ldots$ in
\eqref{eq:Hn-vector-space-isomorphism} gives
\eqref{eq:Hn-dimension}.
\end{proof}

We finish by showing the first few bases explicitly in the normalization
used in this article. On $\Hh=\C\times\R$, if
$z=x+\imath y$, then
\[
  \Delta_H(z^a\bar z^bt^c)=
  4ab\,z^{a-1}\bar z^{b-1}t^c
  +4\imath c(b-a)z^a\bar z^bt^{c-1} +4c(c-1)z^{a+1}\bar z^{b+1}t^{c-2}.
\]
Terms carrying a zero prefactor are understood to be omitted, so no negative
power occurs when $a=0$, $b=0$, or $c<2$.
\begin{equation}\label{eq:H1-low-degree-bases}
\begin{array}{c@{\quad}|@{\quad}l}
 m & \text{a complex basis of }H_m(\Hh)\\[.4em]
0 & 1\\[2pt]
1 & z,\ \bar z\\[2pt]
2 & z^2,\ t,\ \bar z^2\\[2pt]
3 & z^3,\ z\left(t+\tfrac{\imath }2|z|^2\right),\
    \bar z\left(t-\tfrac{\imath}2|z|^2\right),\ \bar z^3\\[6pt]
4 & z^4,\ z^2\left(t+\tfrac{2\imath}3|z|^2\right),\
    t^2-\tfrac12|z|^4,\
    \bar z^2\left(t-\tfrac{2\imath}3|z|^2\right),\ \bar z^4.
\end{array}
\end{equation}
For each fixed $m$, the displayed polynomials have distinct angular
frequencies and hence are linearly independent. Their number is $m+1$, so
\cref{cor:Hn-harmonic-dimension} shows that each row is a basis.

\section{Invertibility of the Fischer operator}
\label{sec:fischer-appendix}

This appendix contains the technical finite-dimensional argument used in
the $\eta_+^2$-decomposition. Recall that, for $j\geq0$, the Fischer operator is
\[
  A_j:P_j(\Hh)\longrightarrow P_j(\Hh),
  \qquad
  A_jq=\Delta_H(wq),
\]
where $w=\eta_+^2$.

\begin{proof}[Proof of \cref{lem:Fischer-auto}]
We decompose $P_j(\Hh)$ under rotations. Set $r=|z|$ and
\[
  \tau=t+\tfrac{r^2}{4},
  \qquad w=4\tau.
\]
In the coordinates $(r,\theta,\tau)$, formula~\eqref{eq:Delta-polar-rt} becomes
\begin{equation}\label{eq:Delta-tau}
\Delta_H
=\partial_r^2+\frac1r\partial_r+\frac1{r^2}\partial_\theta^2
+r\partial_r\partial_\tau+(1-4\partial_\theta)\partial_\tau
+\frac{17}{4}r^2\partial_\tau^2.
\end{equation}
Although polar notation is used, all the spaces below consist of polynomials, so the identities extend across $r=0$. The allowed angular frequencies are
\[
  \Lambda_j=\bigl\{\nu\in\mathbb Z \ : \ |\nu|\leq j,\ j-|\nu|\text{ is even}\bigr\}.
\]
For $\nu\in\Lambda_j$ set $L=(j-|\nu|)/2$. The frequency-$\nu$
component has the polynomial basis
\[
  e_\ell^{(j,\nu)}
  =\tau^\ell r^{j-2\ell}e^{\mathrm{i}\nu\theta},
  \qquad 0\leq\ell\leq L.
\]
For example, when $\nu\geq0$ the horizontal factor is
$z^\nu(r^2)^{(j-2\ell-\nu)/2}$.
Substitution in~\eqref{eq:Delta-tau} gives
\begin{equation} \label{eq:A-block}
\begin{aligned}
A_je_\ell^{(j,\nu)} = {} &
17\ell(\ell+1)e_{\ell-1}^{(j,\nu)}
+4(\ell+1)(j-2\ell+1-4\mathrm{i}\nu)e_\ell^{(j,\nu)} \\
&\qquad +4\bigl((j-2\ell)^2-\nu^2\bigr)e_{\ell+1}^{(j,\nu)}.
\end{aligned}
\end{equation}
Suppose first that $\nu\neq0$. For $0\leq\ell<L$, the product of the two
adjacent off-diagonal coefficients in~\eqref{eq:A-block} is
\[
  17(\ell+1)(\ell+2)\,
  4\bigl((j-2\ell)^2-\nu^2\bigr)>0.
\]
Hence any positive diagonal change of basis makes the off-diagonal part of the block matrix real symmetric. Its diagonal entries have imaginary parts equal to
$-16\nu(\ell+1)$. If the rescaled matrix $B$ satisfied $Bv=0$, then
\[
  0=\operatorname{Im}\langle Bv,v\rangle
  =-16\nu\sum_{\ell=0}^L(\ell+1)|v_\ell|^2,
\]
which forces $v=0$. Thus every nonzero angular block is invertible.

It remains to consider $\nu=0$, which occurs only for $j = 2N-2$ even. If $A_jq=0$ in the rotation-invariant block, then
$h=wq$ is a rotation-invariant harmonic polynomial of degree $2N$. Set
\[
  R=(r^4+t^2)^{1/2}=\rho^2.
\]
By restriction injectivity in~\cref{prop:carnot-harmonic-direct} and the
rotation-type statement in~\cref{prop:angular}, the rotation-invariant harmonic
space of degree $2N$ is one-dimensional and it is spanned by
\begin{equation}\label{eq:zonal}
  Z_N(r,t)=R^N P_N(t/R),
\end{equation}
where $P_N$ is the Legendre polynomial.
Indeed, with $u=r^2$ one has
\[
  \Delta_Hf(u,t)=4u\left(f_{uu}+\frac1u f_u+f_{tt}\right),
\]
and~\eqref{eq:zonal} is the usual axisymmetric solid harmonic in three dimensions.
It is a polynomial because $P_N$ has the parity of $N$.

If $h=cZ_N$ is divisible by $w$, then it vanishes at $(r^2,t)=(4,-1)$. At this point, we have $R=\sqrt{17}$, and hence
\[
  Z_N(2,-1)=17^{N/2}P_N(-1/\sqrt{17})=(-1)^NS_N,
  \quad
  S_N:=17^{N/2}P_N(1/\sqrt{17}).
\]
The Legendre generating function gives, as a formal power series,
\[
  \sum_{N=0}^\infty S_N\zeta^N
  =(1-2\zeta+17\zeta^2)^{-1/2}
  =\bigl((1-\zeta)^2+16\zeta^2\bigr)^{-1/2}.
\]
Expanding the last expression yields
\begin{equation}\label{eq:SN-formula}
  S_N=
  \sum_{k=0}^{\lfloor N/2\rfloor}
  (-1)^k4^k\binom{2k}{k}\binom{N}{2k}.
\end{equation}
The $k=0$ term is one, and every term with $k\geq1$ is divisible by eight. Hence
\[
  S_N\equiv1\pmod8 \implies S_N \neq 0.
\]
Therefore $c=0$ and $q=0$. The zero angular block is injective as well.

\smallskip
This concludes the proof that every block of $A_j$ is injective. Since $P_j(\Hh)$ is finite-dimensional, $A_j$ is an automorphism.
\end{proof}

\end{document}